\title[Hyperbolic Optimal Response]{Optimal Response for Hyperbolic Systems by the fast adjoint response method}
\date{March 2024}
\begin{document}
\begin{abstract}
In a uniformly hyperbolic system, we consider the problem of finding the optimal infinitesimal perturbation to apply to the system, from a certain set $P$ of feasible ones, to maximally increase the expectation of a given observation function.

We perturb the system both by composing with a diffeomorphism near the identity or by adding a deterministic perturbation to the dynamics.
In both cases, using the fast adjoint response formula, we show that the linear response operator,
which associates the response of the expectation to the perturbation on the dynamics,
is bounded in terms of the $C^{1,\alpha}$ norm of the perturbation. 
Under the assumption that $P$ is a strictly convex, closed subset of a Hilbert space $\cH$ that can be continuously mapped in the space of $C^3$ vector fields on our phase space,
we show that there is a unique optimal perturbation in $P$ that maximizes the increase of the given observation function. Furthermore since the response operator is represented by a certain element $v$ of $\cH$,  when the feasible set $P$ is the unit ball of $\cH$, the optimal perturbation is $v/||v||_{\cH}$.
We also show how to compute the Fourier expansion $v$ in different cases.
Our approach can work even on high dimensional systems. We demonstrate our method on numerical examples in dimensions 2, 3, and 21.

\smallskip
\noindent \textbf{Keywords.}
Linear response,
Uniform hyperbolicity,
optimal response,
fast adjoint response formula.
\end{abstract}

\maketitle

\section{Introduction}

\subsection{Literature review}
\hfill\vspace{0.1in}
\label{s:literature review}

The understanding of the statistical properties of the long term behaviour of deterministic dynamical systems has important applications in the natural and social  sciences. 
When a dynamical system is perturbed, it is useful to understand and predict the response of the system's statistical properties to the perturbations. 
We say that the system exhibits a linear response to the perturbation when such a response is differentiable. 
In this case, the long-time average of a given observable  changes smoothly during the perturbation (see \Cref{respdef} for a more precise definition). 
Understanding the linear response has particular importance in applications, in particular in climate science (see e.g. \cite{GhLuc}).

Uniformly hyperbolic maps display a linear response for suitably smooth perturbations and observables. This was first proved by Ruelle in \cite{Ruelle_diff_maps} and then generalized to many directions in further works (see \cite{Gouezel2006} or \cite{Baladi2007} for a survey).
There are several  formulas and characterizations for the linear response of hyperbolic systems, but many involve distributions or unstable parts. 
We will use a so-called `fast adjoint response' linear response formula (see \cite{Ni_asl,TrsfOprt}), which is based on a pointwise defined function, and on which it is thus easy to perform analytic estimates and numerical computations.



In the present paper, we address a natural inverse problem related to the Linear Response: the Optimal Response. 
Given an observable function $\Phi$, we study which infinitesimal perturbation, chosen from a set of feasible perturbations $P$, produces the greatest change in the expectation of $\Phi$. 
In the paper, we address this problem in the case where the system is uniformly hyperbolic, finding conditions under which the optimal perturbation uniquely exists, and we show a method to approximate numerically the optimal perturbation.

The optimal response problem was studied for the first time in \cite{ADF}, for finite state Markov chains.
The case of dynamical systems having kernel transfer operators (including random dynamical systems having additive noise) was considered in \cite{optimalresponse2022}.
The case of 1-dimensional deterministic expanding circle maps was considered in \cite{optimal_response_23}.
Those papers also considered the problem of optimizing the spectral gap and hence the speed of mixing (see also \cite{Froyland2007}).
The paper \cite{G22} considers a similar problem, where the optimal coupling is studied in the context of mean field coupled systems.

A related problem is the `linear request problem', related to the search for a perturbation achieving a prescribed response direction \cite{Bodai20,optimalresponse2017,GG19,MacKay18,Kloeckner18}.
All these studies are related to the optimal response, in terms of understanding how a system can be modified in order to control the behavior of its statistical properties.

The method we use for the numerical approximation of the optimal perturbation, which is an improvement of the method presented in \cite{optimal_response_23}, is based on the computation of the responses of the elements from a Hilbert basis of the space of allowed perturbations (see \Cref{s:fourier}).
Computing the linear response of a system to a given perturbation is a nontrivial task which was considered in different works, with different purposes and approaches. Some approaches can  approximate the response up to explicit small errors (see e.g. \cite{Bahsoun2018,Pollicott2016,NiTC}), while others are only proven to converge to the correct value under suitable assumptions.
In this direction, computing the response of multidimensional hyperbolic maps is a hard task which  was approached in relatively recent times by several works (see e.g. \cite{fr,far,Chandramoorthy2021a}). 
The present paper applies the fast adjoint response algorithm developed in \cite{far}.
This algorithm is based on a characterization of the response formula which is based on functions which can be stably computed on orbits of the system (see Section \ref{s:review far}). 
This method is hence reliable and allows fast computations even in high dimensions, also allowing us to compute the linear response of many perturbations as we need to compute the optimal perturbation.

\subsection{Main results}\label{1.2}
\hfill\vspace{0.1in}

In this section we  explain more precisely but still a bit informally the kind of problems we are going to consider and outline our main results. 

Let us consider  a mixing axiom A attractor, $K$, of a $C^3$ diffeomorphism $f$, on a $C^\infty$ manifold $\cM$ of dimension $M$. 
We denote its physical measure  by $\mu$, supported on $K$. 
We consider a certain observable $\Phi:\cM\to \mathbb{R}$ whose expectation we want to increase as much as possible, by applying perturbations from a certain set of allowed perturbations $P$.
It is natural to think of the set of allowed perturbations $P$ as a convex set because if two different perturbations of the system are possible, then their convex combination (applying the two perturbations with different intensities) should also be possible. 
We will also consider $P$ as a subset of a suitable separable Hilbert space $\cH$ which is supposed to be continuously mapped on the a space of $C^3$ vector fields on a suitable neighborhood of the attractor of $f$ (an example is when $\cH$ is is a Sobolev space $H^p$ for $p$ large enough and hence is a subspace of $C^3$).
We will consider perturbations to the system both by composing with a diffeomorphism near to the identity or by perturbing directly the map defining the dynamics by an additive deterministic perturbation. We will see that for both kinds of perturbation we have the same properties and results, thus here we will generically indicate with $X $ a vector field indicating the direction of perturbation in one of the two ways to perturb a system we consider
(see the beginning of Section \ref{s:review far} for the precise definition of the  perturbations by composing with a diffeomorphism and Section \ref{S:add} for the details about the additive deterministic perturbation to the map). 
Let $X\in P$ be such a vector field, representing a feasible direction of perturbation. 
Let $\mu _{X,\gamma }$ \ be the invariant probability measure of the system after applying a perturbation in the direction $X$ with intensity $\gamma $ (see Sections \ref{s:review far} and \ref{S:add} for the details on how the response operators are defined). 
Let the response of the average of $\Phi$ to this perturbation be denoted as%
\begin{equation} \label{respdef}
R(X):=\lim_{\gamma \rightarrow 0}\frac{\int \Phi \ d\mu _{X,\gamma }-\int \Phi  \ d\mu}  {%
\gamma }.
\end{equation}%
We are interested in finding the element $X\in P$ for which $R(X)$ is maximized, thus we are interested in finding $X_{opt}$ such that
\begin{equation} \label{e:maxprob}
R(X_{opt}) = \underset{X\in P}{\sup }R(X). 
\end{equation}
As we will see, this is the maximization of a continuous linear function on a convex set, and the maximum can be achieved.

The first result of this paper (see  \Cref{l:boundR}) shows that, for a  hyperbolic system $(\cM, f)$, there is $0<\alpha<1$, such that the linear response operator $R$ can be extended to the space of $C^{1,\alpha}$ vector fields and $R: C^{1,\alpha}(\cM, T\cM) \rightarrow \R$ is linear and bounded.
In the proof of \Cref{l:boundR} we use the recent characterization of the linear response formula of hyperbolic maps given in \cite{Ni_asl} and \cite{TrsfOprt} (see Section \ref{s:review far}). This characterization  enables a simple estimation of the linear response in the hyperbolic case, only requiring fast enough decay of correlations.
In particular we have that $R$ is also continuous on the space of $C^3$ vector fields, and as $\cH$ is continuously mapped in such space we can see this operator as a continuous linear operator on $\cH$.
This easily implies that when $P$ is strictly convex and $R$ is not identically null the problem \eqref{e:maxprob} has a unique solution (see Section \ref{sec:gen} and Proposition \ref{gen:ex} for precise statements).

Once we know that $R$ is a continuous linear operator on the Hilbert space $\cH$, by the Riesz representation theorem, there is a unique $\cH$ representative of $R$, denoted by $v$.
By a simple calculation, we can show that $X_{opt} = v/||v||_{\cH}$ is the unique maximizer when the permissible set $P$ is the unit ball in $\cH$.
It is then easy to approximate $v$ by computing its Fourier coefficients by applying $R$ on a Fourier basis of $\cH$  (see Proposition \ref{gen:algo}).

When the background space $\cM$ is a torus, we can choose $\cH$ as the Sobolev space $H^p(\cM, T\cM)$ and there is an obvious orthogonal basis of $H^p(\cM, T\cM)$, which is the set of trigonometric functions.
We can then compute the Fourier expansion of $v$ and $X_{opt}$ by applying the response operator $R$ to each element of the Hilbert basis (see Section \ref{s:fourier}).

In \Cref{s:numeric} we numerically implement the algorithm explained above on examples of hyperbolic maps on the 2, 3, and 21-dimensional tori; the results confirm that our method finds the optimal response, and that the linear responses we compute are correct, also showing that our method is suitable to address the problem on  high-dimensional systems.

\Cref{s:2 general cases} generalizes the approximation scheme for the computation of the optimal perturbation, giving a recipe to approximate $v$ when $\cM$ is not a torus, and there is no obvious orthogonal basis, so the Fourier method is not easy to implement.
We relate the problem to solving a Laplacian equation with suitable boundary conditions.
We will not perform numerical investigations in this case.

\section{Review on fast response formulas}
\label{s:review far}

%

In the following, we will benefit from a particular characterization of the linear response for deterministic perturbations of uniformly hyperbolic systems established in \cite{Ni_asl,TrsfOprt}.
This characterization is called the `fast adjoint response formula'.  
In this section we review the formula and the tools necessary to understand it. 
The formula is composed of two parts, the adjoint shadowing lemma \cite{Ni_asl} and the equivariant divergence formula \cite{TrsfOprt}.
The continuous-time versions of fast response formulas can be found in \cite{Ni_asl,vdivF}.

Let us consider a dynamical system $(\cM, f)$ where $\cM$ is a  $C^\infty$ manifold  of dimension $M$ and $f:\cM \to \cM$ is a  {$C^3$} diffeomorphism.
Suppose the system has  a mixing axiom A attractor we  denote by $K$ and on which a physical measure $\mu$ is supported. 
Denote the stable and unstable subspaces by $V^s$ and $V^u$, and the pushforward operator on vectors by $f_*$.
Consider a vector field $X$ on $\cM$.
We define the oblique decomposition of $X$ as
\[ \begin{split}
  X = X^u + X^s, \quad \textnormal{where} \quad X^u\in V^u,  \; X^s \in V^s.
\end{split} \]

In this section we will perturb $f$ by composing it by a family of $C^3$ diffeomorphinms $\tf_\gamma$  smoothly parameterized by a small real number $\gamma$, which converges to the identity at  $\gamma=0$.
We then consider the family of systems $(\cM, f_\gamma)$ where 
\[
f_\gamma = \tf_{\gamma} \circ f
\]
whose physical measures will be denoted by $\mu_\gamma$.
The existence of the linear response in this case was proved when the map $\gamma\mapsto \tf_\gamma$ is $C^1$ from $\R$ to the family of {$C^3$}  diffeomorphisms on $\cM$.
In this paper, the regularities of all functions and maps are stated on the entire $\cM$ unless otherwise noted.

Denote the derivative with respect to the parameter $\gamma$ as 
$\delta (\cdot) :=\left. \pp{(\cdot)}\gamma \right|_{\gamma=0}$, suppose the perturbation on the dynamics is such that 
\begin{equation} \label{e:X}
\delta \tf_{\gamma} :=\left. \pp{\tf_{\gamma}}\gamma \right| _{\gamma=0} = X.
\end{equation}
In the following, for optimization purposes we will assume that $X$ has a further regularity and belongs to a suitable Sobolev space $X\in H^p(\cM)$ with $p$ such that $H^p\subset C^3$ (see \Cref{e:morrey}).

For any fixed observable function $\Phi\in C^3(\cM)$, the linear response $\delta  \mu_\gamma(\Phi)$   has the following decomposition into what we call the `shadowing contribution' and `unstable contribution' \cite{Ruelle_diff_maps}.
\[ \begin{split}
\delta  \mu_\gamma(\Phi) 
  = S.C. + U.C. ,
  \quad\quad
  S.C. = \mu (d\Phi \cdot S(X)) , \\
  U.C. = \lim_{W\rightarrow\infty} \mu \left(\phi_W \frac {\delta \tL^u_\gamma \sigma} {\sigma}\right),
  \quad \textnormal{where} \quad 
  \phi_W:= \sum_{m=-W}^W \left[\Phi\circ f^m 
  - \mu(\Phi) \right].
\end{split} \]
Here $\mu$ is the physical measure for the map $f$.
We will explain the notations and the variant version of the linear response formula that we use in the following paragraphs.

In the shadowing contribution, $S:\cX^\alpha \rightarrow \cX^\alpha$ is the (linearized) shadowing operator on $\cX^\alpha$, the space of H\"{o}lder continuous vector fields.
A characterization for $S$ is that, $v := S(X) $ is the only bounded vector field satisfying the variation equation 
\[v = f_*v + X.\]
Intuitively, since the variation equation describes the evolution of small perturbations of an orbit, the boundedness of $v$ indicates that $v$ is the difference between two shadowing orbits, which are two close orbits with governing equations perturbed in the direction of $X$.

For the shadowing contribution, we can apply the adjoint shadowing lemma in \cite{Ni_asl}, to get
\[ \begin{split}
  S.C. = \mu (\cS(d\Phi) X),
\end{split} \]
where $\cS:\cX^{*\alpha} \rightarrow \cX^{*\alpha}$ is the adjoint shadowing operator on H\"{o}lder continuous covector fields.
A characterization for $\cS$ is that, $\omega := \cS(d\Phi) $ is the only bounded covector field such that 
\[\omega = f^*\omega + d\Phi.\]
In \cite{Ni_asl}, we further showed that $\cS(d\Phi)$ has two other equivalent characterizations, and it is H\"{o}lder continuous.
The computation of $\cS$ and $S$ are given by the nonintrusive shadowing algorithm in \cite{Ni_nilsas,Ni_NILSS_JCP}.

In the unstable contribution, $\sigma$ is the density of the conditional measure of $\mu$ on unstable foliations.
As explained in detail in \cite{TrsfOprt}, we define $\tL^u_\gamma$ as the transfer operator of the composition of $\tf_\gamma$ and the holonomy map.
As an intuitive explanation, $\tL_\gamma^u$ is the transfer operator redistributing the probability on unstable submanifolds according to the vector field $\gamma X^u$, and
\[
\delta \tL_\gamma^u \sigma := \left. \pp{\tL^u_\gamma \sigma}{\gamma} \right| _{\gamma=0}.
\]
We could write this as a submanifold divergence, $\delta \tL^u_\gamma \sigma= \div^u (\sigma X^u)$, but $X^u$ is not differentiable, so this expression is not useful for our purposes.

The equivariant divergence formula in \cite{TrsfOprt} is a pointwisely defined expression (without exponentially growing terms) for the term $\frac{\delta \tL^u_\gamma \sigma}{\sigma}$ in the unstable part,
\begin{equation}\begin{split}
\label{e:uto}
  - \frac{\delta \tL^u_\gamma \sigma}{\sigma}
  = \div^v X + (\cS(\div^vf_*)) X.
\end{split} \end{equation}
Here $\cS$ is the adjoint shadowing operator; $\div^v X$ is $\nabla X$ contracted by the unstable hypercube and its co-hypercube in the adjoint unstable subspace, 
\begin{equation}\begin{split}
\label{e:vdiv}
  \div^v X:= \teps \nabla_\te X.
\end{split} \end{equation}
Here $\tilde e = e_1\wcw e_u$ is the unit unstable $u$-vector {field} defined on the attractor {of $f$},
where $e_i\in V^u$ are unstable vectors, and $\teps$ is the unstable $u$-covector field such that $\teps(\te)=1$ everywhere on the attractor.
Note that $\te$ and $\teps$ are not differentiated in this expression.
Similarly, $\div^v f_*$ is the contraction of the 2-tensor $f_*$ by the same dual basis, so $\div^v f_*$ is a H\"{o}lder-continuous covector field.

We considered a family $f_\delta$ of perturbations of $f$ with physical invariant measures $\mu_\delta$. Since it is well known (\cite{Ruelle_diff_maps}) that the linear response $\delta  \mu_{\gamma}(\Phi)$  depends only on the direction of perturbation $X$.  We can now define more precisely  the linear response operator  $\mathcal{R}: C^3(\cM,T\cM)\to \mathbb{R}$, outlined in \eqref{respdef} as 
\begin{equation}\label{Resp}
{\mathcal{R}}(X):=
\delta  \mu_{\gamma}(\Phi).
\end{equation}

Summarizing the above formulas from \cite{Ni_asl} and \cite{TrsfOprt}, we have the following expression for the linear response  
\[ \begin{split}
{\mathcal{R}}(X)   = \lim_{W\rightarrow\infty} 
  \mu \left[(\cS(d\Phi) + \phi_W \cS(\div^vf_*) )  X 
  + \phi_W  \div^v X \right] .
\end{split} \]
We call this characterization of the linear response the {\it fast adjoint response formula.}
To estimate $R$, we decompose the linear response into three parts,   
\begin{equation} \label{R1} \begin{split} 
R_1(X) 
:= \mu \left[\cS(d\Phi) X \right].
\end{split} \end{equation}
\begin{equation}\label{R2W} \begin{split}
R_{2W}(X)
  := \mu \left[ \phi_W (\cS(\div^vf_*) X )\right].
\end{split} \end{equation}
\begin{equation} \label{R3W}\begin{split}
R_{3W}(X)
  := \mu \left[ \phi_W \div^v X \right].
\end{split} \end{equation}
So $\mathcal{R} = \lim_{W\rightarrow\infty} R_1 + R_{2W} + R_{3W}$.

In numerical experiments, since the fast adjoint response formula transforms everything into suitable functions, it is particularly suitable for computational purposes and orbit-based methods.
The fast adjoint response formula can be computed by evolving only $2u$ vectors per step, which is currently the most efficient for orbit-based sampling in higher dimensions \cite{far,fr}.

Moreover, the algorithm based on such formula is `adjoint' in the utility sense, which means that it tends to be faster when we want to compute the linear responses of many perturbations.
The earlier fast `tangent' response formula in \cite{fr} does not involve covectors, so it runs only forward in time, and is more efficient if we only want to compute the linear responses of only a few perturbations. 
The fast response algorithms are currently the most efficient for sampling the hyperbolic linear response by an orbit.

\section{Continuity and extension of the Linear Response operator}\label{sec:cont}
In this section we prove that the response operator $\mathcal{R}$ well-defined in \eqref{Resp} is defined and continuous on the space of $C^{1+\alpha}$ vector fields on $\cM$, and hence on the space of $C^3$ vector fields.
This regularity result is the basis for the study of the optimal perturbations and response of the present work.

\begin{lemma}
\label{l:boundR}
There is $0<\alpha<1$ and $C>0$ such that,
\[
|\mathcal{R}(X)|
\le C || X ||_{C^{1,\alpha}}
\le || X ||_{C^3}.
\]
Hence, $\mathcal{R}$ is a bounded operator on $C^3(\cM,T\cM)$.
\end{lemma}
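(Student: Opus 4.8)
The plan is to estimate each of the three pieces $R_1$, $R_{2W}$, $R_{3W}$ of the fast adjoint response formula separately, and to control the limit in $W$ by exploiting decay of correlations. The bound $\|X\|_{C^{1,\alpha}} \le \|X\|_{C^3}$ is immediate, so the real work is the first inequality. Throughout I would fix a H\"older exponent $\alpha\in(0,1)$ small enough that all the standard objects attached to the hyperbolic attractor --- the stable/unstable distributions, the unstable Jacobian, the conditional densities $\sigma$, the adjoint shadowing images $\cS(d\Phi)$ and $\cS(\div^vf_*)$, and the unit unstable multivector $\te$ with its dual $\teps$ --- are $C^{\alpha}$ (or better), which is guaranteed by the standard regularity theory for axiom A attractors and the results of \cite{Ni_asl, TrsfOprt} cited in the excerpt. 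I would also record that $\mu$ is a probability measure, so integration against $\mu$ does not cost anything beyond sup-norms.

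For $R_1(X) = \mu[\cS(d\Phi)X]$ the estimate is essentially trivial: $|R_1(X)| \le \|\cS(d\Phi)\|_{C^0}\,\|X\|_{C^0} \le C\|X\|_{C^{1,\alpha}}$, using that $\cS(d\Phi)$ is a fixed bounded (H\"older) covector field depending only on $\Phi$ and $f$. For $R_{3W}(X) = \mu[\phi_W \div^v X]$ the pointwise bound $|\div^v X| = |\teps\,\nabla_{\te}X| \le C\|\nabla X\|_{C^0}\le C\|X\|_{C^1}$ holds because $\te,\teps$ are bounded, so $|R_{3W}(X)| \le \|\phi_W\|_{L^1(\mu)}\,C\|X\|_{C^1}$. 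The point is that $\|\phi_W\|_{L^1(\mu)}$ must stay bounded as $W\to\infty$: here I would invoke exponential (or just summable) decay of correlations for the mixing axiom A attractor, i.e. $|\mu(\Phi\circ f^m) - \mu(\Phi)|=|\mathrm{Cov}_\mu(\Phi, \mathbf 1\circ f^m)|$... more precisely one wants $\sum_m |\mu((\Phi\circ f^m-\mu\Phi))| < \infty$; since $\mu(\Phi\circ f^m)=\mu(\Phi)$ exactly for an invariant measure, each summand vanishes, but $\phi_W$ is a sum of functions, not scalars, so the right statement is that $\phi_W$ converges in a suitable (H\"older or $L^2(\mu)$) norm to a limit function $\phi_\infty$ by summability of correlations, giving $\sup_W\|\phi_W\|_{L^1(\mu)}\le\|\phi_\infty\|+o(1)<\infty$. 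The same convergence handles $R_{2W}(X)=\mu[\phi_W\cS(\div^vf_*)X]$, for which $|R_{2W}(X)|\le \sup_W\|\phi_W\|_{L^1(\mu)}\,\|\cS(\div^vf_*)\|_{C^0}\,\|X\|_{C^0}$.

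Putting the three together and passing to the limit in $W$ gives $|\mathcal{R}(X)| = |\lim_W (R_1+R_{2W}+R_{3W})(X)| \le C(\|X\|_{C^0} + \|X\|_{C^1}) \le C\|X\|_{C^{1,\alpha}}$, and since every constant above depends only on $f$, $\Phi$, and the geometry of the attractor but not on $X$, $\mathcal{R}$ extends to a bounded linear functional on $C^{1,\alpha}(\cM,T\cM)$; continuity on $C^3$ follows from $\|\cdot\|_{C^{1,\alpha}}\le\|\cdot\|_{C^3}$. I expect the main obstacle to be making the $W\to\infty$ control fully rigorous: one needs the convergence of $\phi_W$ in a norm strong enough to pair with the H\"older objects $\div^v X$ and $\cS(\div^vf_*)X$ and weak enough to follow from decay of correlations on the attractor. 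The cleanest route is to note that $\phi_W$ converges in $C^{\alpha'}$ for a possibly smaller exponent $\alpha'$ (because $\Phi\in C^3$ and the correlations of H\"older observables under an axiom A mixing system decay exponentially, so the H\"older norms of the tails are summable), which simultaneously yields the uniform $L^1(\mu)$ bound and legitimizes the limit; this is exactly the "fast enough decay of correlations" hypothesis alluded to after the statement of \Cref{l:boundR}. A secondary, purely bookkeeping obstacle is confirming that $\div^v X$ depends on $X$ only through $\nabla X$ evaluated pointwise (no derivatives of $\te$, $\teps$), which the formula \eqref{e:vdiv} and the remark following it make explicit, so that one genuinely only pays one derivative of $X$ and no H\"older seminorm is needed --- the $C^{1,\alpha}$ norm in the statement is there only because the ambient regularity theory for the hyperbolic data lives in H\"older spaces, not because $\mathcal{R}$ itself sees the $\alpha$-seminorm of $X$.
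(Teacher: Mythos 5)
Your decomposition into $R_1+R_{2W}+R_{3W}$ and your treatment of $R_1$ match the paper, but the way you control the limit $W\to\infty$ contains a genuine error. You factor the integrand as $\mu[\phi_W\, g]\le \|\phi_W\|_{L^1(\mu)}\,\|g\|_{C^0}$ and then try to show $\sup_W\|\phi_W\|_{L^1(\mu)}<\infty$ by arguing that $\phi_W=\sum_{|m|\le W}(\Phi\circ f^m-\mu(\Phi))$ converges in $L^1(\mu)$ or even in some $C^{\alpha'}$. This is false for a mixing axiom A system: the partial sums $\phi_W$ are exactly the centered Birkhoff sums, and by the central limit theorem $\phi_W/\sqrt{2W+1}$ converges in distribution to a nondegenerate Gaussian whenever $\Phi$ is not a coboundary, so $\|\phi_W\|_{L^2(\mu)}$ and $\|\phi_W\|_{L^1(\mu)}$ grow like $\sqrt{W}$. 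Summable decay of correlations makes the \emph{scalars} $\mathrm{Cov}_\mu(\Phi\circ f^m,g)$ summable; it does not make the \emph{functions} $\Phi\circ f^m-\mu(\Phi)$ summable in any norm. The correct mechanism, which is what the paper uses, is to keep $\phi_W$ paired with the test function and expand
\[
\mu[\phi_W\, g]=\sum_{m=-W}^{W}\mathrm{Cov}_\mu\!\left(\Phi\circ f^m,\ g\right),
\qquad
\left|\mathrm{Cov}_\mu\!\left(\Phi\circ f^m, g\right)\right|\le C\lambda^{|m|}\,\|g\|_{C^\alpha}\,\|\Phi\|_{C^1},
\]
with $g=\cS(\div^v f_*)X$ or $g=\div^v X$; the exponential decay in $|m|$ makes the sum converge and gives a bound proportional to $\|g\|_{C^\alpha}$.

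This also invalidates your closing remark that the $\alpha$-seminorm of $X$ is never really used and that $\|X\|_{C^1}$ would suffice for $R_{3W}$. Since the correlation-decay estimate is paid in the H\"older norm of the test function, you must bound $\|\div^v X\|_{C^\alpha}$ and $\|\cS(\div^v f_*)X\|_{C^\alpha}$, not just their sup norms. The paper does this with a ``finite difference'' product rule: since $\te,\teps$ are only $C^\alpha$ and $\div^v X=\teps\nabla_{\te}X$, the H\"older seminorm of $\div^v X$ genuinely involves the H\"older seminorm of $\nabla X$, i.e.\ $\|\div^v X\|_{C^\alpha}\le C\|X\|_{C^{1,\alpha}}$. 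That is precisely why the lemma is stated with the $C^{1,\alpha}$ norm rather than the $C^1$ norm. To repair your argument you should abandon the uniform-in-$W$ $L^1$ bound on $\phi_W$, apply decay of correlations term by term, and supply the H\"older product estimates for $\nu X$ and $\div^v X$.
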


\begin{remark*}
The first inequality still holds for the norm $|| X ||_{C^{1,\alpha}(K)}$, which is the $C^{1,\alpha}$ morm measured on the attractor $K$ instead of the entire $\cM$.
This is because the fast adjoint response formulas involve only quantities on the attractor. 
\end{remark*}

\begin{proof} We use the characterization of the response operator shown at \eqref{R1}, \eqref{R2W}, \eqref{R3W} and estimate each one of the summands  $R_1$, $R_{2W}$, $R_{3W}$ described there.
For for the first summand $R_1$ we have,
\[ \begin{split}
|R_1(X) |
= |\mu \left[\cS(d\Phi) X \right]|
\le C ||X||_{C^0},
\end{split} \]
where the $C^0$ norm is just the maximum of the absolute value of the function, and the constant $C$ does not depend on $X$ (but may depend on the unperturbed dynamics).

For $R_{2W}$, first recall that both $V^u$ and $V^{u*}$ are H\"{o}lder continuous, and that the expression of the equivariant divergence $\div^v$ in \Cref{e:vdiv} indicates that $\div ^v f_*$ is a H\"{o}lder continuous covector field on the attractor.
Since the adjoint shadowing operator $\cS$ preserves H\"{o}lder continuity \cite[appendix]{Ni_asl}, there is a constant $\alpha>0$, such that $\cS (\div ^v f_*) $ is a $C^{\alpha}$ covector field.
By the decay of correlation in mixing axiom A systems (with respect to H\"{o}lder observables, see for example \cite{DKL}, Theorem 4.1) there is a constant $C>0$ and $0<\lambda<1$ such that

\[ \begin{split}
|R_{2W}(X)|
= |\mu \left[ \phi_W( \cS(\div^vf_*) X )\right]|
\le \sum _{n=-W}^W C \lambda^n 
||\cS (\div ^v f_*) X||_{C^\alpha}
||{\color{red}}\Phi||_{C^1}.
\end{split} \]
We want to bound $||\cS (\div ^v f_*) X ||_{C^\alpha}$ by some norm of $X$.
Denote $\nu:=\cS (\div ^v f_*)$, which is a $C^\alpha$ covector field.
For any $x, y$ in the compact attractor,
by the `finite difference' version of the product rule for differentiation,
\[\begin{split}
\frac{|\nu(x) X (x) - \nu(y) X (y)|}{|x-y|^\alpha}
\le 
\frac{|\nu(x) X (x) - \nu(x) X (y)|}{|x-y|^\alpha}
+\frac{|\nu(x) X (y) - \nu(y) X (y)|}{|x-y|^\alpha}
\end{split}\]
Since $|x-y| < C |x-y|^\alpha$ on the compact set $\cM$,
\[\begin{split}
\frac{|\nu(x) X (x) - \nu(y) X (y)|}{|x-y|^\alpha}
\le 
C ||\nu||_{C^0} ||X||_{C^1} + C ||\nu||_{C^\alpha} ||X||_{C^0}    
\le 2C
||\nu||_{C^\alpha} ||X||_{C^1}.
\end{split}\]
Hence, $
||\cS (\div ^v f_*) X ||_{C^\alpha}
\le 2 C ||\cS (\div ^v f_*)||_{C^\alpha} ||X||_{C^1}$,
and we can define the limit 
\[
R_{2} := \lim_{W\rightarrow\infty} R_{2W}.
\]
Hence, we have the bound
\[
|R_2| \le C ||X||_{C^1}
\]
for some $C\geq0$.

For $R_{3W}$, note that $\div^v X$ defined in \Cref{e:vdiv} is a contraction of $\nabla X$, so for fixed $x$ and then for any $y$ close to $x$, in local coordinates, we have 
\[
|\div^v X(x)-\div^v X(y)|
= |tr (\ueps \nabla X \ue (x) )- 
tr (\ueps \nabla X \ue (y) )|.
\]
Here $\ue$ is a matrix whose columns are the basis elements of $V^u$, and $\ueps$ is the matrix composed of the dual basis such that $\ueps^T e = I_{u\times u}$.
Note that $tr$, the trace function of matrices, would return the same result regardless of the selection of the basis, as long as we are looking at the same subspace.
Since $V^u$ and $V^{u*}$ are H\"{o}lder, we can find basis at $y$ such that 
\[
|\ue(x) - \ue(y)|, \;
|\ueps(x) - \ueps(y)|
\le C |x-y|^\alpha
\]
Here the matrix norm can be taken as the max absolute entry value.
Hence, apply the product rule to the expression of $\div^v X$, we get
\[\begin{split}
|\div^v X(x)-\div^v X(y)|
\le C|\ueps \nabla X \ue (x) - 
\ueps \nabla X \ue (y) |
\le C|\ueps(x) \nabla X(x) ( \ue(x)- \ue (y)) | 
\\
+ C|\ueps(x) (\nabla X(x) -\nabla X(y)) \ue (y)) | 
+ C|(\ueps(x) -\ueps(y) \nabla X(y) \ue (y)) | 
\\
\le C  ||\nabla X||_{C^0} |x-y|^\alpha
+ C ||\nabla X||_{C^\alpha} |x-y|^\alpha
+ C |x-y|^\alpha ||\nabla X||_{C^0}
\\
\le  C ||\nabla X||_{C^{\alpha}} |x-y|^\alpha
\le  C ||X||_{C^{1,\alpha}} |x-y|^\alpha.
\end{split}
\]
So $\div^v X$ is H\"{o}lder with norm $||\div ^v X ||_{C^\alpha}\le C ||X||_{C^{1,\alpha}}$.
Hence,
\[ \begin{split}
|R_{3W}(X)|
:=
|\mu \left[ \phi_W \div^v X \right]|
\le \sum _{n=1}^W C \lambda^n 
||\div ^v X ||_{C^\alpha}
||\Phi||_{C^1}
\le C || X ||_{C^{1,\alpha}}
\end{split} \]
for some $C\geq0$.
\end{proof}

\begin{proposition}
\label{l:Rextend}
For any fixed observable function $\Phi\in C^3$, and the $\alpha$ prescribed in \cref{l:boundR}, there is a unique continuous extension of the definition domain of the linear response operator $\mathcal{R}$ to the entire $C^{1,\alpha}$. 
\end{proposition}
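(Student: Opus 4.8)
The plan is to invoke the standard fact that a bounded (equivalently, continuous) linear operator defined on a dense linear subspace of a normed space extends uniquely to a bounded linear operator on the closure of that subspace, with the same operator norm. In our situation the ambient space is $C^{1,\alpha}(\cM,T\cM)$ with its usual H\"older norm, the dense subspace is $C^3(\cM,T\cM)$, and the operator is $\mathcal{R}$, which by \Cref{l:boundR} satisfies $|\mathcal{R}(X)|\le C\|X\|_{C^{1,\alpha}}$ for all $X\in C^3(\cM,T\cM)$. So the key ingredients are: (i) $C^3(\cM,T\cM)$ is dense in $C^{1,\alpha}(\cM,T\cM)$; (ii) $\mathcal{R}$ is already linear on $C^3$; (iii) $\mathcal{R}$ is continuous for the $C^{1,\alpha}$ norm on that subspace; (iv) the abstract extension principle; and (v) uniqueness of the extension among \emph{continuous} maps, which is automatic once density is known.

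First I would recall that $\mathcal{R}$ is linear on $C^3(\cM,T\cM)$: this is clear from the defining formulas \eqref{R1}--\eqref{R3W}, since $\mathcal{R}(X)$ is built from $X$ (and $\nabla X$) through the fixed linear operators $\cS$, $\div^v$, integration against $\mu$, and a convergent limit in $W$, all of which are linear in $X$. Next I would address density: given $X\in C^{1,\alpha}(\cM,T\cM)$ and $\varepsilon>0$, a standard mollification (convolve the components of $X$ in local charts with a smooth bump and patch with a partition of unity, or use a global mollifier after embedding $\cM$ in Euclidean space) produces $X_\varepsilon\in C^\infty(\cM,T\cM)\subset C^3(\cM,T\cM)$ with $\|X_\varepsilon-X\|_{C^{1,\alpha'}}\to 0$ for any $\alpha'<\alpha$; if one wants convergence in the genuine $C^{1,\alpha}$ norm rather than a slightly weaker H\"older exponent, one can instead note that it suffices to extend $\mathcal{R}$ from $C^3$ to the \emph{closure} of $C^3$ inside $C^{1,\alpha}$, and the estimate in \Cref{l:boundR} holds verbatim on that closure. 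Either formulation is acceptable; I would phrase the statement as an extension to all of $C^{1,\alpha}$ using the little-H\"older-type density, or remark that the closure of $C^\infty$ in $C^{1,\alpha}$ is the little H\"older space $c^{1,\alpha}$, and since $\mathcal{R}$ is bounded for the $C^{1,\alpha}$ norm the extension is defined on the closure and agrees with any such limit.

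Then the core of the argument: for $X\in C^{1,\alpha}(\cM,T\cM)$ pick any sequence $X_n\in C^3(\cM,T\cM)$ with $X_n\to X$ in $C^{1,\alpha}$; by linearity and \Cref{l:boundR}, $|\mathcal{R}(X_n)-\mathcal{R}(X_m)|=|\mathcal{R}(X_n-X_m)|\le C\|X_n-X_m\|_{C^{1,\alpha}}$, so $(\mathcal{R}(X_n))_n$ is Cauchy in $\R$ and hence converges; define $\mathcal{R}(X)$ to be that limit. A second sequence $X_n'\to X$ gives $|\mathcal{R}(X_n)-\mathcal{R}(X_n')|\le C\|X_n-X_n'\|_{C^{1,\alpha}}\to 0$, so the limit is independent of the choice of approximating sequence; this gives well-definedness. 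Linearity and the bound $|\mathcal{R}(X)|\le C\|X\|_{C^{1,\alpha}}$ pass to the limit, so the extension is again a bounded linear operator with the same constant $C$. Finally, uniqueness: if $\mathcal{R}'$ is any continuous extension, then for $X\in C^{1,\alpha}$ and $X_n\in C^3$ with $X_n\to X$ we have $\mathcal{R}'(X)=\lim_n \mathcal{R}'(X_n)=\lim_n \mathcal{R}(X_n)=\mathcal{R}(X)$, using continuity of $\mathcal{R}'$ and that $\mathcal{R}'$ agrees with $\mathcal{R}$ on $C^3$.

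The only genuine point requiring care — the "main obstacle," though it is mild — is the density statement and the matching of H\"older exponents: smooth vector fields are dense in $C^{1,\alpha}$ only in the weaker topology $C^{1,\alpha'}$, $\alpha'<\alpha$, not in $C^{1,\alpha}$ itself (the closure of $C^\infty$ is the little H\"older space $c^{1,\alpha}\subsetneq C^{1,\alpha}$). I would handle this by either (a) restating the conclusion as an extension to $c^{1,\alpha}$ — which already contains $C^{1,\alpha'}$ for every $\alpha'>\alpha$, hence in particular every $C^3$ vector field, and is all that is needed for the optimization application since $\cH\hookrightarrow C^3 \subset c^{1,\alpha}$ — or (b) simply noting that \Cref{l:boundR} already bounds $\mathcal{R}$ in the full $C^{1,\alpha}$ norm, so nothing prevents the symbol "$C^{1,\alpha}$" in the statement from denoting the closure of $C^3$ in that norm; the remark after \Cref{l:boundR} about norms on the attractor is consistent with this. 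Everything else is the routine bounded-linear-extension lemma.
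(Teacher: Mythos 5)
Your proof is essentially the same as the paper's: both apply the standard bounded-linear-extension argument, defining $\mathcal{R}(X)$ as the limit of $\mathcal{R}(X_k)$ along a $C^3$ sequence converging to $X$, using \Cref{l:boundR} for the Cauchy property and an interleaving/continuity argument for independence of the sequence and uniqueness. The one substantive difference is that you correctly flag a point the paper's proof asserts without comment: $C^3$ is \emph{not} dense in $C^{1,\alpha}$ for the genuine H\"older norm (the closure of smooth functions is the little H\"older space $c^{1,\alpha}$), so the extension as literally stated only reaches that closure; your proposed fixes — restating the target space as $c^{1,\alpha}$ or as the $C^{1,\alpha}$-closure of $C^3$, either of which suffices for the application since $\cH\hookrightarrow C^3$ — are both sound and would tighten the paper's statement.
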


\begin{remark*}
We extend the definition of $\mathcal{R}$ to $C^{1,\alpha}$, but we did not prove that the linear response exists for $\delta \tf \in C^{1,\alpha}$, which is more difficult.
It seems plausible that the perturbation $\delta \tf$ may be less regular than the initial map $f$. 
Similar observations for expanding maps have been made and proved in \cite{optimal_response_23}.
\end{remark*}

\begin{proof}
For any $X\in C^{1,\alpha}$, since $C^3$ is dense in $C^{1,\alpha}$, we can find $\{X_k\}_{k\in \N} \subset C^3$ such that $X_k \rightarrow X$ in $C^{1,\alpha}$.
Hence, $\{X_k\}_{k\in \N}$ is a Cauchy sequence in $C^{1,\alpha}$, that is, 
\[
\lim_{k\rightarrow\infty} \sup_{m,n \ge k}
||X_m - X_n||_{C^{1,\alpha}} = 0
\]

We already have $\mathcal{R}$ defined on $C^3$, due to proofs in for example \cite{Ruelle_diff_maps}.
Since $R$ is linear, \cref{l:boundR} shows that
\[
|\mathcal{R}(X_m) - \mathcal{R}(X_n)|
= |\mathcal{R}(X_m - X_n)|
\le ||X_m - X_n||_{C^{1,\alpha}}.
\]
Hence, $\mathcal{R}(X_k)$ is a Cauchy sequence in $\R$, so we can define $\mathcal{R}(X)$ 
\[
\mathcal{R}(X) :=\lim_{k\rightarrow\infty} \mathcal{R}(X_k).
\]

This limit does not depend on the selection of the sequence $\{X_k\}_{k\in \N}$.
Indeed, if another sequence $Y_k \rightarrow X$ in $C^{1,\alpha}$,
then we can repeat the previous proof for the sequence $X_1, Y_1, X_2, Y_2, \ldots$
\end{proof}

\section{Existence and characterization of optimal perturbation}
\label{s:exist and char}

This section establishes the unique existence of the optimal response under suitable general assumptions also showing how to compute its Fourier expansion.
This is explained in Subsection \ref{sec:gen}.

We then show in \Cref{s:Hp rep} more concrete examples where the perturbations are given by the composition with a diffeomorphism close to the identity and their direction range  in a Sobolev space $H^p$ for a suitable $p$.
In \Cref{s:fourier} we consider the above kinds of perturbations and we show how to approximate the optimal perturbation in the case where $\cM$ is the $M-$dimensional torus.

\Cref{S:add} Considers the same problem for additive deterministic perturbations to $f$ and for these slightly different perturbations, results similar to the ones proved in \Cref{s:Hp rep} are established.

\subsection{A general setup for the optimal response.}\label{sec:gen}
\hfill\vspace{0.1in}

In this section we set up the problem in a general framework, which will allow us to consider different kinds of perturbations in the following.
We remark that in the applications it is reasonable to think that the perturbations we  apply to the system  are somewhat independent from the system, for example we might have a hight dimensional phase space, but a restricted set of perturbations we can apply to the system. For example, one can have control only on some parameter defining the system  or act on the dynamics only  along some coordinate of the phase space.  
\footnote{We will see in the following (see the end of Section \ref{s:fourier}) that the computational cost of the algorithm we propose to approximate the optimal perturbation is mainly related to the "size" of the set where the perturbations can range rather than the dimension of the phase space $\cM$. This makes the algorithm particularly suitable for applications where $\cM$ is high dimensional.}

We will hence suppose that the abstract perturbations we may apply range on a certain subset $P$ of a separable Hilbert space $\mathcal{H}$  endowed with a scalar product $\langle ,  \rangle$ and norm $||.||_{\mathcal{H}}$.
We will suppose that there is a linear and continuous function $I: \mathcal{H}\to C^3(\cM,T\cM)$  associating to an abstract perturbation, a concrete vector field on $\cM$ characterizing a direction of perturbation.
The associated response to the abstract perturbation is $R:P\to \mathbb{R}$, defined by
\begin{equation}\label{Resp2}
    R(p):=\mathcal{R}(I(p)).
\end{equation}

We remark that since $I$ is continuous, by Lemma \ref{l:boundR}, $R$ is continuous.
In this framework, the problem we consider is to find $X_{opt}\in P$ such that 
\begin{equation}
\label{e:gen-func-opt-prob}
{R}(X_{opt}) = \max_{X\in P}{R}(X).
\end{equation}

This is a general formalization  of the informal problem \eqref{e:maxprob} stated in Section \ref{1.2}.

Now we show a general result, establishing the existence and the uniqueness of the optimal perturbation in this setting.

\begin{definition}[Strictly convex set]
\label{stconv}We say that a convex closed set $A\subseteq \mathcal{H}$ is 
{strictly convex} if for all pair $x,y\in A$ and for all $0<\gamma <1$,
the points $\gamma x+(1-\gamma )y\in \mathrm{int}(A)$, where the relative
interior\footnote{%
The relative interior of a closed convex set $C$ is the interior of $C$
relative to the closed affine hull of $C$.} is meant.
\end{definition}
{Note that the unit ball of $\mathcal{H}$ is strictly convex.} With this definition we can state

\begin{proposition}\label{gen:ex}
    Let $P$ be a bounded, convex and closed set of $\mathcal{H}$, then the problem \eqref{opt-prob-add} has a solution. 
    If furthermore $P$ is strictly convex and $R$ is not constanttly null on $P$, then the solution is unique.
\end{proposition}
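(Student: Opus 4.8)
The plan is to prove existence by a standard weak-compactness argument and uniqueness by exploiting strict convexity together with linearity of $R$. Throughout, recall from \Cref{l:boundR} and the continuity of $I$ that $R = \mathcal{R}\circ I$ is a bounded linear functional on $\mathcal{H}$; in particular $R$ is weakly continuous. (I note in passing that the statement refers to ``problem \eqref{opt-prob-add}'', which should be \eqref{e:gen-func-opt-prob}; I will argue about the latter.)

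\textbf{Existence.} First I would let $s := \sup_{X\in P} R(X)$, which is finite since $P$ is bounded and $R$ is bounded, and take a maximizing sequence $X_n\in P$ with $R(X_n)\to s$. Because $\mathcal{H}$ is a separable Hilbert space and $P$ is bounded, $(X_n)$ has a subsequence converging weakly to some $X_{opt}\in\mathcal{H}$; since $P$ is convex and norm-closed, it is weakly closed (Mazur's theorem), so $X_{opt}\in P$. Finally, since $R$ is a bounded linear functional it is weakly sequentially continuous, hence $R(X_{opt}) = \lim_k R(X_{n_k}) = s$, so $X_{opt}$ is a solution.

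\textbf{Uniqueness.} Suppose $P$ is strictly convex, $R$ is not identically zero on $P$, and that $X, Y\in P$ are both maximizers, $R(X) = R(Y) = s$. First I would observe $s > 0$: indeed, since $P$ is convex with nonempty relative interior and $R$ is a nonzero linear functional that does not vanish identically on $P$, $R$ is nonconstant on $P$, so $\sup_P R > \inf_P R$, and in particular $s = \sup_P R$ exceeds the value of $R$ at the midpoint of any two points where $R$ differs; one checks this forces $s>\inf_P R$, and a nonconstant linear functional on a convex set with interior cannot attain its max on all of $P$, giving $s$ strictly above the average value, hence (after possibly recentering) $s>0$ is not essential — what matters is the following. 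If $X\neq Y$, then by strict convexity the midpoint $Z := \tfrac12 X + \tfrac12 Y$ lies in the relative interior of $P$; hence there is $\varepsilon>0$ with $Z + \varepsilon w\in P$ for all $w$ in the closed affine hull direction space with $\|w\|_{\mathcal H}\le 1$. By linearity $R(Z) = \tfrac12 R(X) + \tfrac12 R(Y) = s$. But since $R$ is not constant on $P$ (being nonzero and $P$ having nonempty relative interior), there is a direction $w$ in the affine hull with $R(w)\neq 0$; replacing $w$ by $-w$ if needed, $R(w)>0$, and then $R(Z+\varepsilon w) = s + \varepsilon R(w) > s$, contradicting that $s = \sup_P R$. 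Hence $X = Y$.

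\textbf{Main obstacle.} The routine part is existence (Mazur plus weak continuity). The delicate point in the uniqueness argument is handling the direction space correctly: $P$ need not have interior in all of $\mathcal{H}$, only relative interior in its closed affine hull, so the perturbation direction $w$ and the nonzero-derivative direction must both be taken inside the linear span of $P - P$; one must check that $R$ restricted to this affine hull is still nonconstant, which follows because if $R$ were constant on the affine hull it would be constant on $P$. Once that bookkeeping is in place, the linearity of $R$ makes the strict-convexity contradiction immediate, so I expect no analytic difficulty beyond citing \Cref{l:boundR} for boundedness of $R$.
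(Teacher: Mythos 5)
Your existence argument is complete and correct, and it is in fact more informative than the paper's own proof, which consists of a single sentence deferring to Propositions 4.1 and 4.3 of \cite{optimalresponse2022}; the weak-compactness route (bounded maximizing sequence, weak sequential compactness of bounded sets in a Hilbert space, Mazur to get weak closedness of the convex closed set $P$, weak continuity of the bounded linear functional $R$) is exactly the standard argument that citation encapsulates. Your observation that the statement should refer to \eqref{e:gen-func-opt-prob} rather than \eqref{opt-prob-add} is also correct.

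The uniqueness part, however, has a genuine gap at the step ``since $R$ is not constant on $P$ (being nonzero and $P$ having nonempty relative interior), there is a direction $w$ in the affine hull with $R(w)\neq 0$.'' The hypothesis of Proposition~\ref{gen:ex} is only that $R$ is \emph{not constantly null} on $P$, i.e.\ $R(x)\neq 0$ for some $x\in P$; this does not imply that $R$ is \emph{nonconstant} on $P$, which is what your perturbation argument actually requires (you need $R$ to be nonzero on the linear span of $P-P$, not merely nonzero at some point of $P$). These two conditions genuinely differ for sets $P$ not containing the origin: take $\mathcal{H}=\mathbb{R}^2$, $R(x_1,x_2)=x_1$ and $P=\{1\}\times[-1,1]$, which is closed, bounded, convex and strictly convex in the sense of Definition~\ref{stconv} (read, as it must be, with $x\neq y$, since otherwise even the unit ball would fail to be strictly convex). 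Here $R\equiv 1$ on $P$, so it is not constantly null, yet every point of $P$ is a maximizer. So the implication you rely on is false in general, and your parenthetical justification does not repair it; the same confusion is what makes the digression about $s>0$ at the start of your uniqueness paragraph inconclusive (that digression can simply be deleted). The fix is either to strengthen the hypothesis to ``$R$ is not constant on $P$'' (which is presumably what the cited propositions assume), or to note that in every application made in this paper $P$ is the unit ball of $\mathcal{H}$, so $0\in P$ and $R(0)=0$, whence ``not constantly null on $P$'' does imply ``nonconstant on $P$'' and your argument goes through verbatim. With that one hypothesis clarified, the strict-convexity perturbation argument is correct, and your care about working inside the relative affine hull is exactly the right bookkeeping.
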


\begin{proof}Since $I$ is linear and continuous, then $R$ is also linear and continuous. We are hence optimizing a bounded linear function on a closed bounded convex set of an Hilbert space.  The statement hence follows from the general theory  on the optimization of a linear function on a convex closed set (see Proposition 4.1, 4.3 in \cite{optimalresponse2022} for more details).
\end{proof}

We now see that when $P$ is the unit ball of $\mathcal{H}$ there is a general way to compute  an approximation of the optimal perturbation $X_{opt}$ by computing its Fourier coefficients with respect to an orthonormal Fourier basis.

\begin{proposition}\label{gen:algo}
    Let  \(\{b_i\}\) be {a Fourier} basis of the Hilbert space 
    \(\mathcal{H}\). In the case $R:\mathcal{H}\to \mathbb{R}$ is not constantly null, the optimization problem \ref{e:gen-func-opt-prob} has unique solution $X_{opt}=\frac{v}{||v||_{\mathcal{H}}}$ where the {Fourier} coefficients $c_i$ of $v\in \mathcal{H}$ are characterized by  $$c_i:=\langle v , b_i \rangle=R(b_i).$$
\end{proposition}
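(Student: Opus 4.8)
The plan is to use the Riesz representation theorem together with the continuity of $R$ established earlier, and then a direct Cauchy–Schwarz argument to identify the maximizer on the unit ball. First I would note that by Lemma \ref{l:boundR} and the continuity of $I$, the composed map $R = \mathcal{R}\circ I : \mathcal{H}\to\mathbb{R}$ is a bounded linear functional on the Hilbert space $\mathcal{H}$; hence, by the Riesz representation theorem, there is a unique $v\in\mathcal{H}$ with $R(p)=\langle v,p\rangle$ for all $p\in\mathcal{H}$. Evaluating this identity on the basis elements $b_i$ gives $R(b_i)=\langle v,b_i\rangle = c_i$, which is exactly the claimed characterization of the Fourier coefficients of $v$; that $\sum_i c_i b_i$ converges to $v$ in $\mathcal{H}$ is just Parseval/completeness of the orthonormal basis.

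Next I would establish that $X_{opt}=v/\|v\|_{\mathcal{H}}$ solves \eqref{e:gen-func-opt-prob} when $P$ is the closed unit ball. Since $R$ is not identically null, $v\neq 0$, so $X_{opt}$ is well-defined and lies in $P$. For any $X\in P$, Cauchy–Schwarz gives $R(X)=\langle v,X\rangle \le \|v\|_{\mathcal{H}}\|X\|_{\mathcal{H}}\le \|v\|_{\mathcal{H}}$, with equality attained at $X=X_{opt}$ since $R(X_{opt})=\langle v, v/\|v\|_{\mathcal{H}}\rangle = \|v\|_{\mathcal{H}}$. This shows $X_{opt}$ is a maximizer. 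Uniqueness follows from Proposition \ref{gen:ex} once we observe (as already noted after Definition \ref{stconv}) that the unit ball is strictly convex and $R$ is not constantly null on it; alternatively, one sees directly that equality in Cauchy–Schwarz forces $X$ to be a nonnegative multiple of $v$, and the norm constraint then pins it down to $v/\|v\|_{\mathcal{H}}$.

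I do not expect any serious obstacle here: the statement is essentially a packaging of the Riesz representation theorem and the optimization-on-a-convex-set result already cited, and the only things to be careful about are (i) invoking the correct earlier results to get boundedness of $R$ on $\mathcal{H}$ rather than merely on $C^{1,\alpha}$, which is immediate from the assumed continuity of $I$, and (ii) making sure the hypothesis "$R$ not constantly null" is used both to guarantee $v\neq0$ and to apply the uniqueness half of Proposition \ref{gen:ex}. The computation of the coefficients $c_i=R(b_i)$ is then algorithmically meaningful precisely because each $R(b_i)$ is a single linear-response evaluation, computable by the fast adjoint response formula of Section \ref{s:review far}.
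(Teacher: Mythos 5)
Your proposal is correct and follows essentially the same route as the paper: invoke the Riesz representation theorem to obtain $v$ with $R(w)=\langle w,v\rangle$, note that the maximum over the unit ball is attained at $v/\|v\|_{\mathcal{H}}$, and read off the Fourier coefficients as $c_i = \langle v, b_i\rangle = R(b_i)$. You merely supply more detail than the paper does (the explicit Cauchy--Schwarz argument and the appeal to Proposition \ref{gen:ex} for uniqueness), which is a faithful filling-in of the same argument rather than a different approach.
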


\begin{proof}

 By the Riesz Representation Theorem, the linear and continuous functional \(R: \mathcal{H} \to \mathbb{R}\) has a representative \(v \in \mathcal{H}\), such that:
   \[
   R(w) = \langle w, v \rangle \quad \text{for all } w \in H.
   \]
The maximum $R(w)$  of \eqref{e:gen-func-opt-prob} is hence realized at $\frac{v}{||v||_{\mathcal{H}}}$ and
$ R(b_i) = \langle b_i, v \rangle$, characterizing the Fourier coefficients of $v$.
\end{proof}

We remark that the last proposition shows that the optimal perturbation can be approximated by computing the linear response $ R(b_i)$ on the elements of the basis. This is a simplification and a generalization of the method used in \cite{optimal_response_23} where the Fourier approximation is used together with Lagrange multipliers to compute the optimal perturbation for  expanding maps of the circle.

\subsection{The optimal response problem for perturbations by diffeomorphisms near to the identity.} 
\hfill\vspace{0.1in}
\label{s:Hp rep}

Let us consider again an axiom A system on a manifold $\cM$ having dimension $M$ with a compact mixing attractor, and suppose the perturbations we apply on our system are by composition with a diffeomorphism near to the identity whose direction is represented by a vector field $X$ as in \eqref{e:X}. The operator $\mathcal{R}$ is then defined by \eqref{Resp} and Lemma \ref{l:boundR} proves that $\mathcal{R}: C^3\to \R$  is a bounded linear operator.
To apply the results of the previous section we will consider a suitable Hilbert space of feasible perturbations. In this case we will consider $\mathcal{H}=H^p(\cM, T\cM)$, the Sobolev space of $H^p$ vector fields on $\cM$.
Let us recall the basic notions about the functional space $H^p$. The norm in this case is induced by the inner product $\ip{\cdot,\cdot}_{H^p}$,
\begin{equation} \label{e:hp_product}
\ip{X,Y}_{H^p} := \int_\cM C_0 XY + C_1 DX \cdot DY +\ldots + C_p D^pX \cdot D^pY dx,
\end{equation}
where the dot-product is the sum of element-wise product.
Here, $C_0, \ldots, C_p$ are constants that can be modulated to modify the weight of different orders of derivatives.

By Morrey's inequality part of the Sobolev embedding theorem \cite{Evans2010}, if 
\begin{equation}
\label{e:morrey}
    p \ge 4 + \lfloor \frac M 2 \rfloor  
\end{equation}
then $H^p\subset C^{3}$.
Here $\lfloor \cdot \rfloor$ is the integer part, rounding down to the closest integer below. 
Note that $p$ increases with the ambient dimension $M$. From now on we hence suppose that $p$ is fixed and satisfies \eqref{e:morrey}.

We now formalize the optimal response problem for these perturbations in the framework of Section \ref{sec:gen}. 
We consider $\mathcal{H}=H^p(\cM, T\cM)$; the set of feasible perturbations $P\subseteq H^p$ will be the unit ball of such space and the function $I$ considered in Section \ref{sec:gen} is the inclusion $I:H^p \xhookrightarrow{} C^3$.  
Since $H^p \subset C^3$ we  get that $R: H^p\to \R$ as defined in \eqref{Resp2} is continuous.

The  optimal response problem \eqref{e:gen-func-opt-prob} in this case takes the following form: we search for an optimal perturbation $X_{opt}$ for which
\begin{equation} \begin{split} 
\label{e:define optr}
R(X_{opt}) = \max_{X\in P}R(X),
\quad \textnormal{ where } \quad \\ 
P := \{X\in H^p(\cM): ||X||_{H^p} \le 1 \}.
\end{split} \end{equation}

By Propositions \ref{gen:ex} and \ref{gen:algo} it directly follow 
\begin{proposition} 
\label{p:hpp}
When $R$ is not null the problem \eqref{e:define optr} has a unique solution $X_{opt}$. Furthermore, $X_{opt} = v/||v||_{H^p}$, where $v$ is a representative of $R$ in $H^p$.
\end{proposition}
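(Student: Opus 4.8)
The plan is to derive Proposition~\ref{p:hpp} as an immediate specialization of the general results of Section~\ref{sec:gen}, so the work is essentially one of checking that the hypotheses of Propositions~\ref{gen:ex} and~\ref{gen:algo} are met in the concrete setting $\mathcal{H}=H^p(\cM,T\cM)$. First I would record that $H^p(\cM,T\cM)$ is a separable Hilbert space with the inner product \eqref{e:hp_product}, and that by Morrey's inequality \eqref{e:morrey} the inclusion $I:H^p\hookrightarrow C^3$ is linear and continuous; composed with $\mathcal{R}$, which is bounded on $C^3$ by Lemma~\ref{l:boundR}, this shows $R=\mathcal{R}\circ I:H^p\to\R$ is a bounded linear functional, exactly the $R$ of \eqref{Resp2}. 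Next I would note that the feasible set $P$ in \eqref{e:define optr} is the closed unit ball of $H^p$, which is bounded, convex, closed, and (by the remark after Definition~\ref{stconv}) strictly convex.

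With these facts in place, existence and uniqueness follow: since $P$ is bounded, convex and closed and $R$ is continuous and linear, Proposition~\ref{gen:ex} gives a maximizer, and strict convexity of $P$ together with the assumption that $R$ is not identically null gives uniqueness. For the explicit form of $X_{opt}$ I would invoke the Riesz representation theorem to obtain the representative $v\in H^p$ with $R(X)=\langle X,v\rangle_{H^p}$ for all $X\in H^p$; then for $\|X\|_{H^p}\le 1$ the Cauchy--Schwarz inequality gives $R(X)=\langle X,v\rangle_{H^p}\le \|X\|_{H^p}\|v\|_{H^p}\le\|v\|_{H^p}$, with equality when $X=v/\|v\|_{H^p}$ (which is legitimate since $R$ not null means $v\neq 0$), so this is the unique maximizer. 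This is precisely the content of Proposition~\ref{gen:algo} applied with $\mathcal{H}=H^p$.

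I do not anticipate a genuine obstacle here, since the proposition is designed to be a corollary; the only point requiring a word of care is confirming that the $R$ appearing in \eqref{e:define optr} really coincides with the abstract $R$ of \eqref{Resp2} for the choice $I=$ inclusion, and that the unit ball of $H^p$ is strictly convex in the sense of Definition~\ref{stconv} (its relative interior is the open ball, since the affine hull of the closed ball is all of $H^p$, and any proper convex combination of two points of the closed ball has norm strictly less than $1$ unless the two points are equal — a standard consequence of strict convexity of Hilbert norms). Having checked these, the proof reduces to the single sentence ``apply Propositions~\ref{gen:ex} and~\ref{gen:algo},'' which is how I would write it.

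\begin{proof}
As noted above, $\mathcal{H}=H^p(\cM,T\cM)$ with the inner product \eqref{e:hp_product} is a separable Hilbert space, and by \eqref{e:morrey} and Morrey's inequality the inclusion $I:H^p\hookrightarrow C^3(\cM,T\cM)$ is a bounded linear map. By Lemma~\ref{l:boundR}, $\mathcal{R}$ is bounded on $C^3(\cM,T\cM)$, so $R=\mathcal{R}\circ I$, which agrees with the operator defined in \eqref{Resp2} and with the $R$ in \eqref{e:define optr}, is a bounded linear functional on $H^p$. The feasible set $P=\{X\in H^p:\|X\|_{H^p}\le 1\}$ is bounded, convex and closed, and it is strictly convex in the sense of Definition~\ref{stconv}: its closed affine hull is all of $H^p$, and for $x\neq y$ in $P$ and $0<\gamma<1$ strict convexity of the Hilbert norm gives $\|\gamma x+(1-\gamma)y\|_{H^p}<1$, so $\gamma x+(1-\gamma)y$ lies in the open unit ball $=\mathrm{int}(P)$. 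Proposition~\ref{gen:ex} then yields a maximizer of $R$ on $P$, and, $R$ being not null, uniqueness. Finally, by Proposition~\ref{gen:algo} the unique maximizer is $X_{opt}=v/\|v\|_{H^p}$, where $v\in H^p$ is the Riesz representative of $R$.
\end{proof}
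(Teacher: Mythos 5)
Your proof is correct and follows essentially the same route as the paper, which derives Proposition~\ref{p:hpp} directly from Propositions~\ref{gen:ex} and~\ref{gen:algo} after setting $\mathcal{H}=H^p$, $I$ the inclusion, and $P$ the unit ball. Your explicit verification that the unit ball is strictly convex in the sense of Definition~\ref{stconv} and that $R=\mathcal{R}\circ I$ is bounded via Morrey's inequality and Lemma~\ref{l:boundR} simply spells out what the paper leaves implicit in the surrounding text of Section~\ref{s:Hp rep}.
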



In the following subsection we see how to choose a suitable Fourier basis to compute $v$, in the case $\cM$ is the $M-$dimensional torus.

\subsection{Expressing \texorpdfstring{$v$}{v} via  Fourier basis when \texorpdfstring{$\cM$}{M} is the torus.}
\hfill\vspace{0.1in}
\label{s:fourier}

In this section  we consider the optimization problem \eqref{e:define optr} in the case where  $\cM  = \T^M$ is the $M$ dimensional torus. 
In this case it is easy to construct an explicit Fourier basis for $H^p$, and we show how to use this to compute the Fourier  approximation for $v$ in this case. 
When $\cM$ is not the torus, finding a basis for $H^p$ adds a further difficulty; or we can opt for solving a PDE for $v$ instead of the Fourier method: this is discussed in \Cref{s:no basis}.



%

In the case $\cM=\T^M$
we have an obvious orthogonal (non-normalized yet) basis of $H^p(\cM)$, which is made by the products of trigonometric functions.
Let us adopt the multi-index notation 
\[\vec{n}:=(n_1, \ldots n_M),\] 
and consider
\begin{equation}\begin{split}
\label{e:basis}
B^j_{\vec{n}}
=
e_j \prod_{i=1}^M b_{n_i}(x_i),\quad
1 \le j\le M,\quad
n_i \ge 0,
\quad \textnormal{where} \quad \\
e_j = (0,0,\ldots,0,1,0,\ldots,0)\in \R^M;
\\
b_m(x_i) = 
\begin{cases}
    1, \quad m=0;
    \\
    \sqrt{2}\sin(\lfloor \frac{m+1}{2} \rfloor 2 \pi x_i) \quad &m \textnormal{ odd}
    ;
    \\
    \sqrt{2}\cos(\lfloor \frac{m+1}{2} \rfloor 2\pi x_i) \quad &m \textnormal{ even}.
\end{cases}
\end{split}\end{equation}
where $\lfloor\cdot\rfloor$ means to round down to closest integer.
Note that $\int_\R b_m^2 dx = 1$.
We define the normalized basis function as
\[
\tilde B^j_{\vec{n}}
:=
\frac{B^j_{\vec{n}}}{||B^j_{\vec{n}}||_{H^p}}.
\]

Here the $H^p$ norm is defined by \Cref{e:hp_product} and it has the following expression (note that it in fact does not depend on $j$),  
\begin{equation}\begin{split}
\label{e:Hpnorm of Fourier}
\ip{B^j_{\vec{n}}, B^j_{\vec{n}}}_{H^p}
= 
\sum_{l=0}^p C_l \sum_{\vec{k}=(1,\ldots,1)\in \R^l }^{(M,\ldots,M)} 
\int \left(\partial_{\vec{k}}
\prod_{i=1}^M b_{n_i}(x_i)\right)^2 dx   
\\
=
\sum_{l=0}^p C_l \sum_{\vec{k}=(1,\ldots,1)\in \R^l }^{(M,\ldots,M)} 
(\lfloor \frac{n_{k_1}+1}{2} \rfloor 2 \pi)^2
\ldots
(\lfloor \frac{n_{k_l}+1}{2} \rfloor 2 \pi)^2.
\end{split}\end{equation}
Note that if $m=0$ then $\lfloor \frac{m+1}{2} \rfloor=0$, so the above formula still applies if one of the directions being differentiated is $b_0\equiv 1$.

Hence, the Fourier expansion of $v$ is 
\begin{equation} \begin{split}
\label{e:vFourier}
v = \sum_{j=1}^M \sum_{{\vec{n}} \ge 0 }
C^j_{\vec{n}} \tilde B^j_{\vec{n}},
\quad \textnormal{where} \quad
C^j_{\vec{n}} 
= \ip{v, \tilde B^j_{\vec{n}}}_{H^p}
= R( \tilde B^j_{\vec{n}}).
\end{split} \end{equation}
Combining the result of Proposition \ref{gen:algo} with the Fourier expression in \Cref{e:vFourier}, we can see that the optimal perturbation achieving the optimal response is
\begin{equation} \begin{split} \label{e:Y}
X_{opt} = \frac{v}{||v||_{H^p}}
= \sum_{j=1}^M \sum_{{\vec{n}} \ge 0 }
\frac{R( \tilde B^j_{\vec{n}})}{||v||_{H^p}} \tilde B^j_{\vec{n}},
\quad \textnormal{where} \quad
||v||_{H^p} = \left( \sum_{j=1}^M \sum_{{\vec{n}} \ge 0 }
(C^j_{\vec{n}})^2 \right)^{\frac12}.
\end{split} \end{equation}

In numerical computations, the number of Fourier basis functions grows exponentially fast on $M$, in the case of the $M$-dimensional torus.
More specifically, in numerics, for each $i$, we typically let $n_i$ only take integer values in $[0, N-1]$, so $\vec{n} \in [0, N-1]^M$.
Also $j$ ranges in $[1,M]$, so the overall number of basis elements is $N_{basis} = M N^M$.
Note that in this case, where we consider a large set of perturbations, acting in all the possible ways on the phace space the number of elements in the basis grows exponentially fast as $M$,
{
so we can not afford real high-dimensional numerical applications if we set $\mathcal{H}=H^p$.
However, as we will show in the following, restricting the space of perturbations we can apply our method to high dimensional phase spaces. 
}

\subsection{Optimal response for additive perturbations and \texorpdfstring{$H^{p}$}{H\^p} representatives in this case
}\label{S:add}
\hfill\vspace{0.1in}

In many cases, perturbations of the dynamics which are natural to be considered are applied directly to the map $f$ defining the dynamics, instead of composing it by a diffeomorphism near the identity as considered in \Cref{s:Hp rep}. 
In this subsection, we prove that for these kinds of perturbations we have results analogous to the results proved in Section \ref{s:Hp rep}.


We  still assume that $f$ is a  $C^3$ uniformly hyperbolic  diffeomorphism with a compact attractor $K$ and $\Phi$ is a $C^3$ observable. We then consider $\overline{\gamma} >0$ and a family of such diffeomorphisms $f_\gamma$, where $\gamma \in [0, \overline{\gamma} )$ such that $f=f_0$.
We also suppose that $\gamma\mapsto f_\gamma$ is $C^1$ from $[0, \overline{\gamma} )$ to the space of  $C^3$ diffeomorphisms. We denote as $\mu_\gamma$ the physical measures of $f_\gamma$ as before, also in this case  the linear response is known to exist.
Considering such  a family of maps $f_\gamma$. The direction of perturbation is then defined as 
\begin{equation*} 
X'
:= \left. \pp{f_\gamma}\gamma \right| _{\gamma=0}.
\end{equation*}
This is still a vector field on $\cM$ but its action on the dynamics is now different from the one of  Section \ref{s:Hp rep}.
We then denote the linear response operator for these kinds of perturbations as
\[\mathcal{R}'(X'):=\delta \mu_\gamma(\Phi).\]

Now we relate this kind of perturbation and response to those considered before.
Since $f$ is a diffeomorphism, we can relate the additive perturbations with the perturbations used in Sections \ref{s:review far} and \ref{s:Hp rep} by defining a diffeomorphism perturbation $\tf_\gamma$ which is equivalent to the additive perturbation by: 
$$\tf_\gamma:=f_\gamma \circ f^{-1}.$$
Denoting as before $X=\delta \tf$ we have 
\begin{equation*} 
X'
= \delta (\tf \circ f)
= \delta \tf \circ f
= X\circ f.
\end{equation*}
Then we can relate the linear response for the additive perturbation $X'$ with the one for the family of diffeomorphisms and set up the related optimization problem.
By \eqref{Resp}  we have:
\begin{equation}\label{sop}
  \mathcal{R}'(X') = \mathcal{R}(X'\circ f^{-1}) = \mathcal{R}(X). 
\end{equation}

We now consider the setting of Section \ref{sec:gen}, with $\mathcal{H}=H^p$ and $P$ being the unit ball of $H^p$ as before, but a vector field there is interpreted as the direction of an additive perturbation by defining for each $X'\in \mathcal{H}$
$$I(X'):=X'\circ f^{-1}.$$
We remark that since $f^{-1}$ is $C^3$, then $I:H^p \to C^3$ is also linear and continuous.

With the notation of Section \ref{sec:gen}, we formulate the optimal response problem for additive perturbations by considering $R'(X'):=\mathcal{R}(I(X'))$ (which in turn is equal to $\mathcal{R'}(X')$ with the notation of \eqref{sop}) and  searching for $X'_{opt}$ such that
\begin{equation}\label{opt-prob-add}
{R'}(X'_{opt})
= \sup_{X' \in P}
{R'}(X') \; 
\end{equation}
where $P := \{X\in H^p(\cM): ||X||_{H^p} \le 1 \}$.




By Proposition \ref{gen:ex} we hence have that the problem has unique solution when $R$ is not constantly equal to $0$. 
Furthermore, Proposition \ref{gen:algo} gives a way to compute this unique solution.

\begin{proposition} When $R'$ is not null the problem \eqref{opt-prob-add} has a unique solution $X'_{opt}$. Furthermore, $X'_{opt} = v'/||v'||_{H^p}$, where $v'$ is a representative of $R'$ in $H^p$.
\end{proposition}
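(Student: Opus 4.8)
The plan is to obtain the statement as an immediate specialization of the abstract results of Subsection \ref{sec:gen}, so the work consists entirely in checking that the data $(\mathcal{H}, P, I, R')$ attached to additive perturbations fit the hypotheses of Propositions \ref{gen:ex} and \ref{gen:algo}. First I would record the structural facts: $\mathcal{H} = H^p(\cM, T\cM)$ is a separable Hilbert space with inner product \eqref{e:hp_product}, and the feasible set $P = \{X' \in H^p : \|X'\|_{H^p} \le 1\}$ is its closed unit ball, hence bounded, convex, closed, and --- as noted right after Definition \ref{stconv} --- strictly convex.

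Next I would check that $R' = \mathcal{R} \circ I$ is a bounded linear functional on $H^p$, so that it plays, in the framework of Subsection \ref{sec:gen}, the role there called $R$. The map $I(X') = X' \circ f^{-1}$ is clearly linear; since $f$ is a $C^3$ diffeomorphism so is $f^{-1}$, and by the chain rule $\|X' \circ f^{-1}\|_{C^3} \le C(f^{-1}) \|X'\|_{C^3} \le C'(f^{-1}) \|X'\|_{H^p}$, the last step using the Morrey embedding $H^p \hookrightarrow C^3$ granted by \eqref{e:morrey}; thus $I : H^p \to C^3$ is continuous. Composing with the bounded linear operator $\mathcal{R} : C^3(\cM, T\cM) \to \R$ from Lemma \ref{l:boundR} shows that $R'$ is linear and bounded, hence continuous, on $H^p$; equivalently, $R'(X') = \mathcal{R}'(X')$ via \eqref{sop}.

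Finally I would invoke the two general propositions. Proposition \ref{gen:ex}, applied to the bounded closed convex set $P$, yields existence of a maximizer of $R'$ over $P$; and if $R' \not\equiv 0$ then $R'$ is not constantly null on $P$ (any $w \neq 0$ with $R'(w) \neq 0$ gives $w/\|w\|_{H^p} \in P$ with $R'(w/\|w\|_{H^p}) \neq 0$), so the strict convexity of $P$ forces the maximizer to be unique. Proposition \ref{gen:algo} then identifies it: taking $v' \in H^p$ to be the Riesz representative of $R'$, so that $R'(w) = \langle w, v' \rangle_{H^p}$, the Cauchy--Schwarz inequality shows the maximum of $\langle w, v' \rangle_{H^p}$ over $\|w\|_{H^p} \le 1$ is attained exactly at $X'_{opt} = v'/\|v'\|_{H^p}$. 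There is no genuine obstacle in this argument; the only step deserving a word is the continuity of the composition operator $I$, which is where the $C^3$ regularity of $f$ is used.
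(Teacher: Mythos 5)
Your proposal is correct and follows exactly the route the paper intends: the paper itself gives no separate proof, simply noting that $I(X') = X'\circ f^{-1}$ is linear and continuous from $H^p$ to $C^3$ (because $f^{-1}$ is $C^3$) and then citing Propositions \ref{gen:ex} and \ref{gen:algo}. Your write-up just fills in the routine verifications (strict convexity of the unit ball, the chain-rule/Morrey bound for $I$, and the Cauchy--Schwarz identification of the maximizer) that the paper leaves implicit.
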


The Fourier scheme described in \Cref{s:fourier} also applies to $v'$, so we can approximate the optimal perturbation $X'_{opt}$ by computing its Fourier coefficients with respect to a suitable basis.

In Section \ref{s:numeric}, we show examples of computation of the optimal perturbation in this additive perturbation case.
In the examples, we will explicitly write how the perturbation is applied to the dynamics.

\section{Algorithms and numerical examples}
\label{s:numeric}


In this section we implement an algorithm for the numerical approximation of the optimal perturbation $X_{opt}$ on a torus and show some examples of such a computation.
First we show two low dimensional examples, in dimension 2 and 3, where we can visualize the attractor and the vectors fields, then a higher dimensional example whose dimension is 21.

In our approximation the Fourier coefficients for  $v$ is are linear responses of Fourier basis, which are computed by the fast adjoint response algorithm.
This algorithm computes the shadowing contribution of the linear response by the nonintrusive adjoint shadowing algorithm, and the unstable contribution by the equivariant divergence formula (see \cite{far} for details on the fast adjoint response algorithm).
The marginal cost for computing $R(X)$ of a new $X$ is much smaller than the first,
since the main computations are moved away from $X$.
It is more efficient for our current situation than the `tangent' version of the fast response algorithm in \cite{fr}, since here we want to compute $R( B^j_{\vec{n}})$ for many $j$ and $\vec{n}$.

The fast response algorithm has two main errors.
The first is due to using a finite decorrelation step number $W$ (see e.g. \eqref{R2W}), and the scale of the error is $  h\sim O(\theta^W)$ for some $0<\theta<1$, related to the speed of decay of correlation in the system.
The second error is the sampling error due to using finite orbit length $T$, like most Monte-Carlo algorithms, the scale of the error is $O(\sqrt{W/T})$.
Using these two estimations, we gave the rough cost-error estimation of the fast response algorithm in \cite{fr}, and studied the numerical convergence with respect to $T$ and $W$ in \cite{fr,far}.

Lastly, we need to design some smart strategy for enumerating all $\vec{n}$'s; such strategy should work for any dimension $M$.
This is not a mathematical question but it did posed significant difficulty for designing the algorithm.
This is explained in \Cref{s:enum vecn}.

\subsection{Numerical example: 2d solenoid-like map}
\hfill\vspace{0.1in}

This subsection illustrates our algorithm on a map on $\cM = \T^2$ with unstable dimension $u=1$ and stable dimension $s=1$.
The base dynamical system here considered is
\[ \begin{split}
  x^1_{k+1} &=f^1(x_n):= 0.5 x^1_{k} + 0.01  \cos(2\pi x^2_{k}) \\
  x^2_{k+1} &=f^2(x_k):= 2x^2_{k} + 0.1 x^1_{k} \sin(2\pi x^2_{k}) \mod 1,
\end{split} \]
where the superscript labels the coordinates, and $k$ labels the time-step.
Note that here we choose the first coordinate to be $[-0.5,0.5]$, the second to be $[0,1]$ for convenience.
The observable function is 
\[ \begin{split}
  \Phi(x) := (x^1)^3 + 0.5 (x - 0.5)^2.
\end{split} \]
\Cref{f:orbit} shows a typical orbit of this nonlinear system, which indicates that the attractor is fractal.

\begin{figure}[ht]
    \centering
    \includegraphics[width=0.5\linewidth]{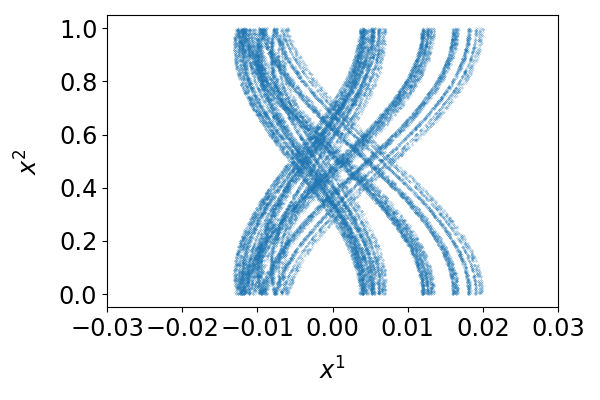}
    \caption{The scatter plot of a typical orbit.}
    \label{f:orbit}
\end{figure}

For convenience, we define the perturbed dynamics as 
\begin{equation}
\label{e:f ga}
   f_\gamma:= f+\gamma X'.
\end{equation}
The space of feasible infinitesimal perturbations is
\[
P:=\{||X'||_{H^5} = 1\}\subset \cH: = H^5(\cM,T\cM) \subset C^{3}(\cM,T\cM).
\]
The weight coefficients in the $H^p$ norm defined in \Cref{e:Hpnorm of Fourier} are chosen as $C_l = (2\pi)^{-2l}$, so
\[\begin{split}
\ip{B^j_{\vec{n}}, B^j_{\vec{n}}}_{H^p}
= 
\sum_{l=0}^p \sum_{\vec{k}=(1,\ldots,1)\in \R^l }^{(M,\ldots,M)} 
(\lfloor \frac{n_{k_1}+1}{2} \rfloor )^2
\ldots
(\lfloor \frac{n_{k_l}+1}{2} \rfloor )^2
,\\
\quad\textnormal{for}\quad
0\le n_i \le N-1=14,
\quad
1\le j\le M=2.
\end{split}\]
Here $N=15$ is the number of basis in each direction of $x$.
The contour plot of the norms of all the non-normalized basis are in \Cref{f:B2contour}.
With this, we can normalize all the basis to get $\tilde B^j_{\vec{n}}$.

\begin{figure}[ht]
    \centering
    \includegraphics[width=0.5\linewidth]{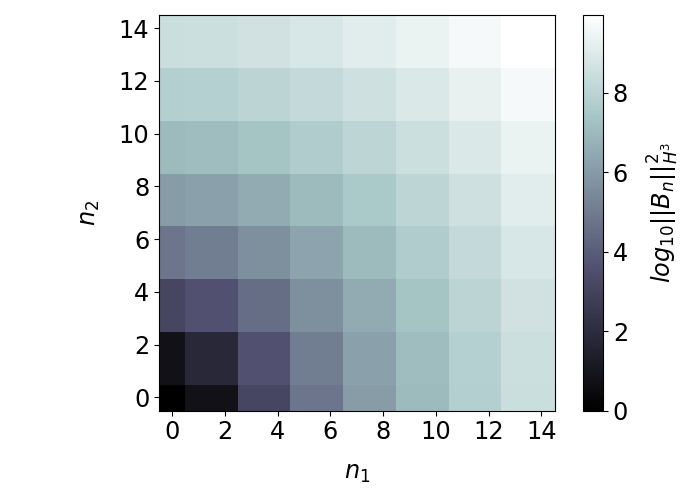}
    \caption{Contour plot of $||B^j_{\vec{n}}||^2_{H^5}$ in log scale.}
    \label{f:B2contour}
\end{figure}

We use the fast adjoint response algorithm to compute the linear response of orthonormal basis, $R'(\tilde B^j_{\vec{n}})$, which is also the coefficient for $v$, the $H^p$ representative of the linear response operator $R'$.
The results are plotted in \Cref{f:RBcontour}.

The default setting for the fast adjoint response algorithm, $N_{seg}=20$ steps in each segment, 
$A=4000$ segments, and $W=10$, is used unless otherwise noted.
The code is at \url{https://github.com/niangxiu/optr}.
On a 3 GHz single core computer, the computation time for computing the linear responses of all $2\times15\times15=450$ basis is 863 seconds.

\begin{figure}[ht]
    \centering
    \includegraphics[width=0.49\linewidth]{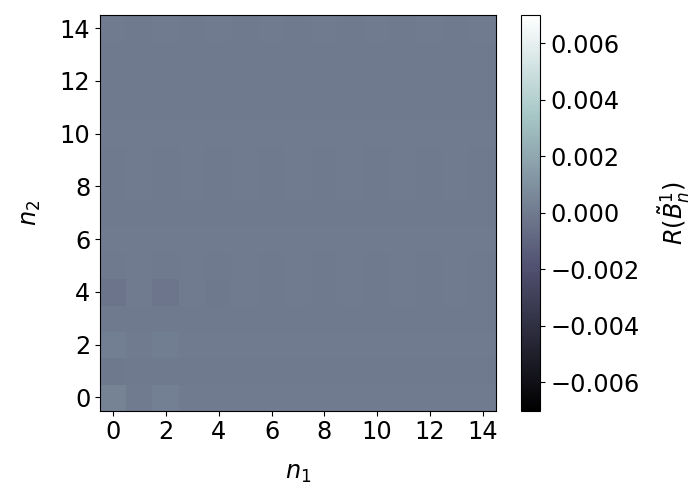}
    \includegraphics[width=0.49\linewidth]{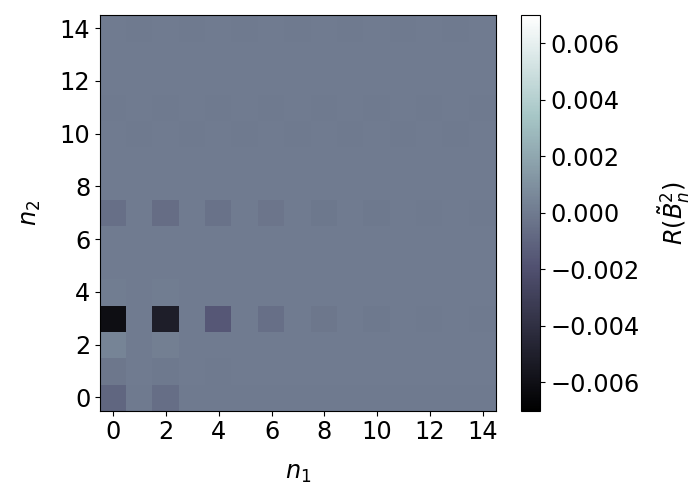}
    \caption{Contour plot of $C^j_{\vec{n}} = R'( \tilde B^j_{\vec{n}})$. 
    Left: $j=1$. Right: $j=2$.}
    \label{f:RBcontour}
\end{figure}

With $R'(\tilde B^j_{\vec{n}})$, we can compute $v'$, then compute $X'_{opt}$, the optimal perturbation achieving the optimal response in \Cref{opt-prob-add}.
The vector field plot of $X_{opt}$ is in \Cref{f:Yvec}.
{
The first six Fourier coefficients of $X'_{opt}$, for  the basis functions $\tilde B^1_{(0,0)},\ldots, \tilde B^1_{(0,5)}$, are:
\begin{equation*}
\begin{split}
[c_0,...,c_{5}]& =
[\texttt{4.4e-2, -8.0e-3, 2.0e-2, -7.8e-4, -3.6e-2, -4.2e-5}].
\end{split}
\end{equation*}
Here \texttt{e-3} means to multiply by $10^{-3}$.
The largest (in absolute value) Fourier coefficient is \texttt{-0.73}, attained for $\tilde B^2_{(0,3)}$, which generates the largest linear response among all the basis.
}

\begin{figure}[ht]
    \centering
\includegraphics[width=0.6\linewidth]{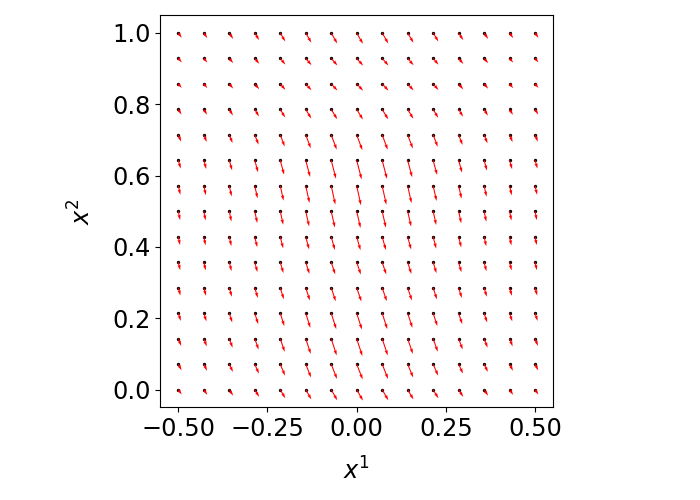}
    \caption{Vector field plot of the optimal perturbation $\frac 14 X'_{opt}$.}
    \label{f:Yvec}
\end{figure}

\Cref{f:prm obj} verifies that the optimal perturbation $X'_{opt}$ computed by our method indeed generates the optimal response.
The optimal response has larger absolute value than the response of any individual basis function.
In particular, it is larger than the linear response of $\tilde B^2_{(0,3)}$, which, by \Cref{f:RBcontour}, generates the largest linear response among all basis.
We also plot the linear response of $\tilde B^2_{(14,14)}$.

Moreover, we verify that the linear response we compute is correct in reflecting the trend between $\gamma$ and $\mu_\gamma(\Phi)$.
To do this, we compute $\mu_\gamma(\Phi)$ for different values of $\gamma$ around 0 and different perturbations, $X'_{opt}$, $\tilde B^2_{(0,3)}$, and $\tilde B^2_{(14,14)}$.
The trend matches the linear response we compute.

\begin{figure}[ht]
    \centering
    \includegraphics[width=0.5\linewidth]{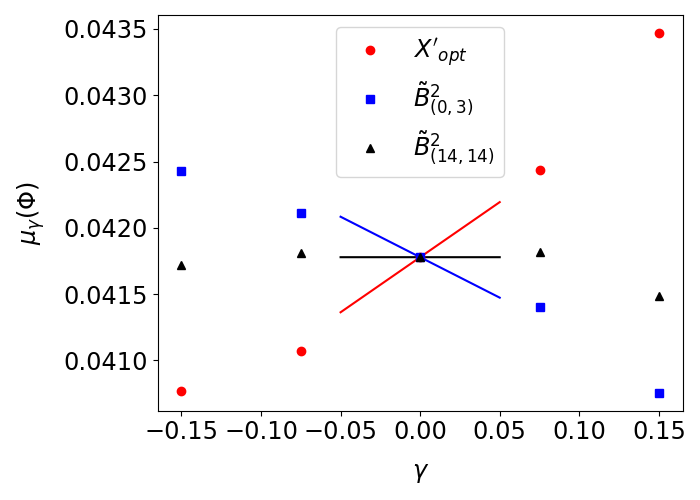}
    \caption{Linear responses and $\mu_\gamma(\Phi)$ of different perturbations.
    The dots are $\mu_\gamma(\Phi)$ for $f+\gamma X'_{opt}$ (indicated by red circles), $f+\gamma \tilde B^2_{(0,3)}$ (blue squares),
    and $f+\gamma \tilde B^2_{(14,14)}$ (black triangles).
    The short lines represent the linear responses computed by the fast response algorithm.
    }
    \label{f:prm obj}
\end{figure}

\subsection{Numerical example: 3d solenoid-like map}
\hfill\vspace{0.1in}

We apply our method on a dynamical system in $\cM = \T ^3$ similar to the map in the previous subsection but with increased dimension.
The base governing equation is 
\[ \begin{split}
  x^1_{n+1} &= f^1(x_n):=  0.5x^1_{n} + 0.01 \sum_{i = 2}^{3} \cos(2\pi x^i_{n})\\
  x^i_{n+1} &= f^i(x_n):= 2x^i_{n} + 0.1 x^1_{n} \sin(2\pi x^i_{n}) \mod 1, \quad \textnormal{for} \quad 2\le i\le 3.
\end{split} \]
And the observable function is 
\[ \begin{split}
  \Phi(x) := (x^1)^3 + 0.5 \sum_{i = 2}^{3} (x^i - 0.5 )^2.
\end{split} \]
The space of feasible infinitesimal perturbations is
\[
P:=\{||X'||_{H^5} = 1\}\subset \cH: = H^5(\cM,T\cM) \subset C^{3}(\cM,T\cM).
\]

When $\cH = H^5(\cM,T\cM)$, due to rapidly growing (with respect to $M$ and $N$) computational cost, we reduce the number of basis in each direction to
\[  
N = 11.
\]
So now we have $3\times 11^3=3993$ functions in the Fourier basis.
The wall-clock time to run the fast adjoint response algorithm on an orbit, with other setting the same as the previous subsection, and with the same computer, is about 5 hours.
The computational cost is much higher than the 2d case, mainly because the number of basis functions is about 10 times as before, and also because $M$ is 1.5 times as before.
Moreover, for this specific model, $u$ is twice as before, and the Jacobian is almost dense, so the cost grows as $O(MN^M uM^2)=O(uN^M M^3)$.
For many real-life applications,  $u$ is fixed regardless of increasing $M$, and the Jacobian is sparse, so the cost would scale as $O(N^M M^2)$ when $\cH$ is the $H^p$ vector fields over $\cM$; this is very expensive.

\begin{figure}[ht]
    \centering
\includegraphics[width=0.49\linewidth]{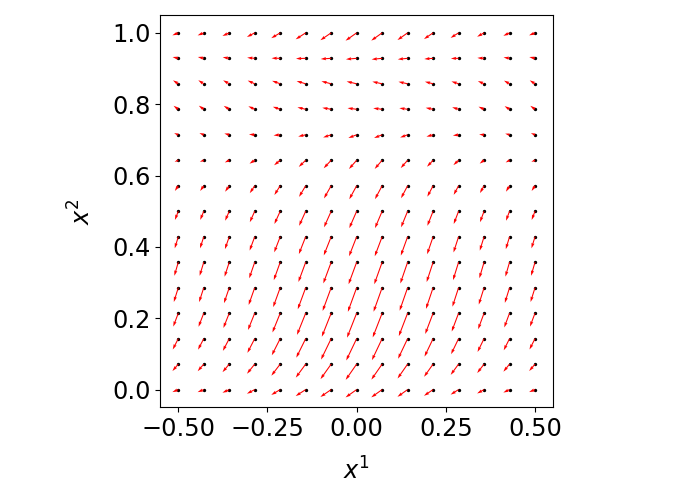}
\includegraphics[width=0.49\linewidth]{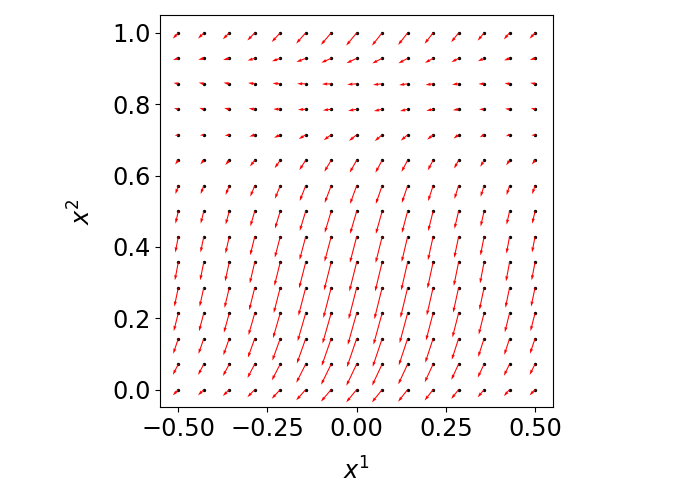}
\caption{Vector field plots of $ \frac 1{4} [X'^1_{opt}, X'^2_{opt}]$ for the 3-dimensional system. 
Left: slice at $x^3=0$.
Right: slice at $x^3=0.5$.
    }
    \label{f:Xopt 3d}
\end{figure}

\begin{figure}
    \centering
    \includegraphics[width=0.5\linewidth]{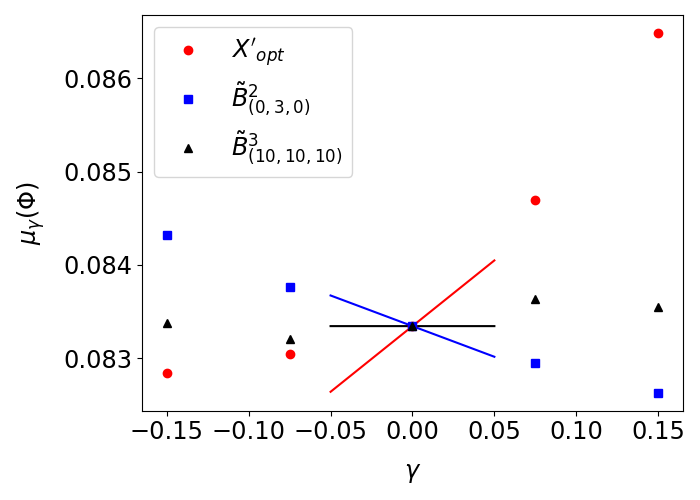}
    \caption{Linear responses and averaged observable of different perturbations of a 3d dynamical system (explanations in the caption of Figure \ref{f:prm obj}).}
    \label{f:prm obj 3d}
\end{figure}

We plot slices of the vector fields $X'_{opt}$ in \Cref{f:Xopt 3d}.
{
The first  six Fourier coefficients of $X'_{opt}$, relative to  the basis functions $\tilde B^1_{(0,0,0)}, \ldots, \tilde B^1_{(0,0,5)}$, are:
\begin{equation*}
\begin{split}
[c_0,...,c_{5}]& =
[\texttt{-7.3e-2, -5.6e-3, -5.8e-3, 4.1e-4, -2.2e-2, -1.3e-4}].
\end{split}
\end{equation*}
The largest (in absolute value) Fourier coefficient is \texttt{-0.47}, attained for $\tilde B^2_{(0,3,0)}$, which generates the largest linear response among all the basis.
}
Then we plot $\mu_\gamma(\Phi)$ versus $\gamma$ and the linear responses at $\gamma=0$ computed for different perturbations in \Cref{f:prm obj 3d}.
This shows that the optimal perturbation indeed generates a linear response larger than all elements in the basis.
It also verifies that the linear response we compute is correct.

\subsection{Numerical example: 21-dimensional solenoid-like map with a "small" \texorpdfstring{$\cH$}{H}}
\hfill\vspace{0.1in}
\label{s:lowH}

We give another example where the phase space is high-dimensional, but $\cH$ is the Sobolev space on a line.
More specifically, the base dynamics in $\T ^{21}$ is
\[ \begin{split}
  x^1_{n+1} &= f^1(x_n):= 0.1x^1_{n} + 0.01 \sum_{i = 2}^{21} \cos(2\pi x^i_{n})\\
  x^i_{n+1} &= f^i(x_n):= 2x^i_{n} + 0.1 x^1_{n} \sin(2\pi x^i_{n}) \mod 1, \quad \textnormal{for} \quad 2\le i\le 21.
\end{split} \]
The unstable dimension is $u=20$.
And the observable function is 
\[ \begin{split}
  \Phi(x) := x^1 + 2 \sum_{i = 2}^{21} (x^i - 0.5 )^2.
\end{split} \]

The perturbed dynamics is
\[
f_\gamma:= f+\gamma X',
\]
where $X'\in  \cH$.
We set $\cH$ as
\[
\cH:=
\{h(x):h^1(x)= h^2(x)=g(x^1), \,
h^3(x) = \ldots = h^{21}(x)=0, \,
g \in H^4(\R)\}.
\]
We set the norm $||h||_\cH = ||h^1(x^1)||_{H^4(\R)}$, 
so $\cH$ is isomorphic to the Sovolev space $H^4$ on the real line.
Hence, $\cH\subset C^3(\T^{21})$ and the inclusion map is continuous.
We set the feasible set $P$ as the unit ball of $\cH$.

The unormalized Fourier basis for $\cH$ is
\[
B_n(x) = [b_n(x^1),b_n(x^1),0,\ldots,0] \,,
\]
where $b_n$ is the 
{
trigonometric 
}
function given in \Cref{e:basis}.
In numerical computations, we truncate the Fourier basis to
\[  
0\le n\le N_{basis} = N = 21.
\]
The wall-clock time to run the fast adjoint response algorithm on an orbit, with other setting the same as the previous subsection, and with the same computer, is about 110 seconds.
The cost 
{
of computing linear responses of all basis functions and the optimal response
}
is much lower than the 3d case because the number of basis functions needed is smaller.

\begin{figure}[ht]
\centering
\includegraphics[width=0.6\linewidth]{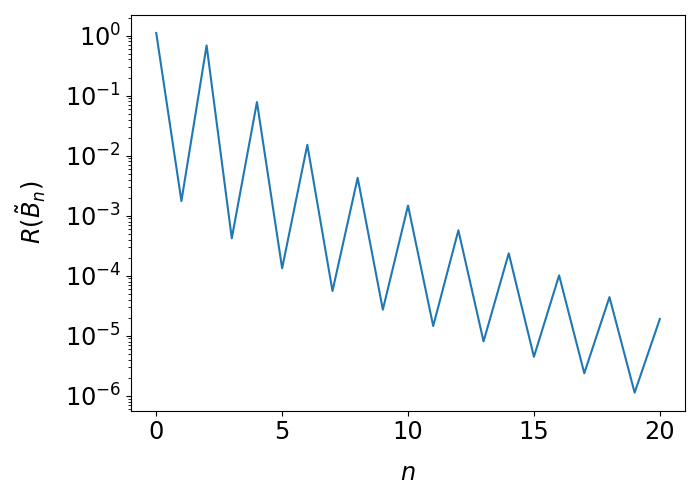}
\caption{$ R(\tilde B_n)$, the linear response for normalized basis functions.
}
\label{f:RB lowH}
\end{figure}

The linear response for each basis function is plotted in \Cref{f:RB lowH}, which basically decays exponentially as $n$ increases, so the error caused by using a finite basis is also very small.
The first seven Fourier coefficients of $X'_{opt}$ with respect to the basis $\tilde B_n$ are:
\begin{equation*}
\begin{split}
[c_0,...,c_{6}]& =
[\texttt{8.5e-1, 1.3e-3, 5.3e-1, 3.2e-4, 6.0e-2, 1.0e-4, 1.2e-2}].
\end{split}
\end{equation*}

\begin{figure}[ht]
    \centering
\includegraphics[width=0.49\linewidth]{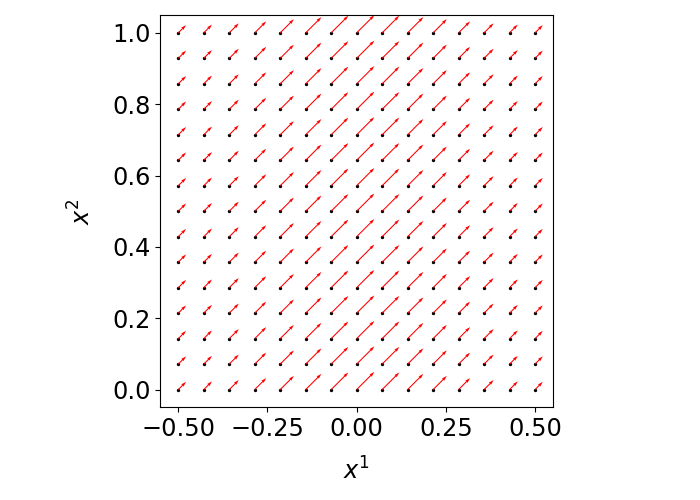}
\includegraphics[width=0.49\linewidth]{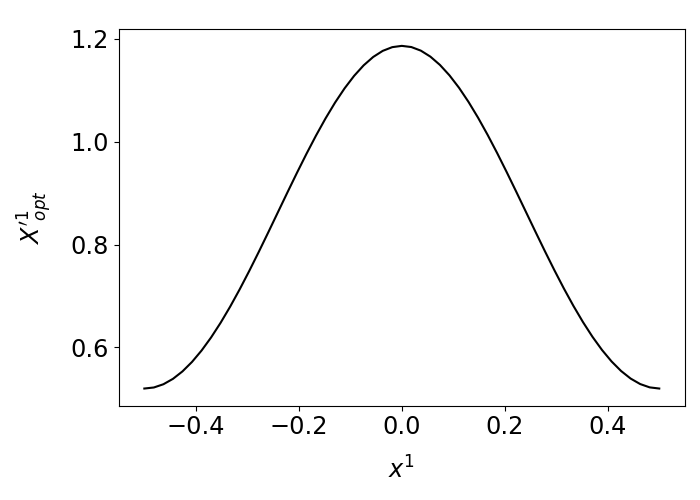}
    \caption{Left: vector field plot of $ \frac 1{24} X'_{opt}$ for the 21-dimensional system. We plot only the first two coordinates, as all the other are $0$.
    {
    Right: $X'^1_{opt}(x^1)$ function plot.
    }
    }
    \label{f:YveclowH}
\end{figure}

\begin{figure}
    \centering
\includegraphics[width=0.5\linewidth]{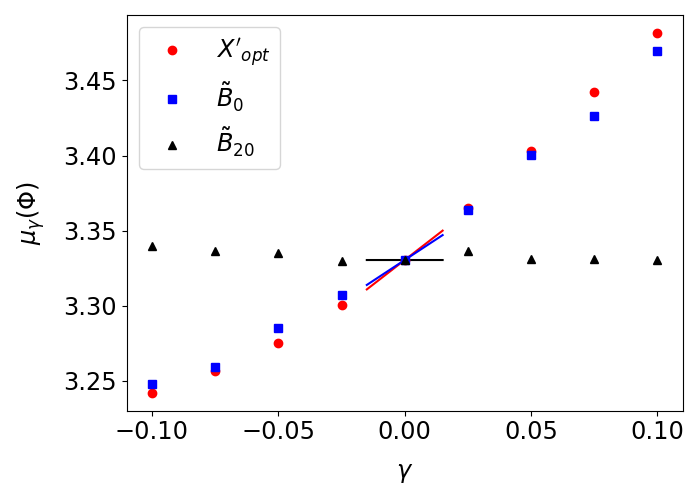}
    \caption{Linear responses and averaged observable of different perturbations of a 21d dynamical system (explanations in the caption of Figure \ref{f:prm obj}).}
    \label{f:prm obj lowH}
\end{figure}

The vector field plot of the optimal perturbation $X_{opt}$ is in \Cref{f:Yvec}.
We can see that $X^1_{opt}=X^2_{opt}$ and it depends only on $x^1$; this is due to the selection of $\cH$.
Then we plot $\mu_\gamma(\Phi)$ versus $\gamma$ and the linear responses at $\gamma=0$ computed for different perturbations in \Cref{f:prm obj lowH}.
Here, the largest linear response among all the basis elements is generated by $\tilde B_{0}$.
This shows that the optimal perturbation indeed generates a linear response larger than all elements in the basis.
This also verifies that the linear response we compute are correct.
Due to the relatively high dimension of $\cM$ and its unstable space, it would be difficult to compute the optimal perturbation for this example using  a  finite-element reduction of the transfer operator associated to the system,  rather than the fast response algorithm.

\section{When \texorpdfstring{$\cM$}{M} is not a torus.}
\label{s:2 general cases}
\label{s:no basis}

In this section, we discuss the generalization of the results of Section \ref{s:exist and char} to the case where $\mathcal{H}=H^p$, but $\cM$ is not a torus.
Rather, $\cM$ is a compact region of $\mathbb{R}^M$ with a smooth boundary.
We will not perform numerical experiments for this case.

The case we are considering now is different from the torus, since we no longer have an obvious basis of $H^p(\cM)$.
Instead, we can solve the Laplacian equation to obtain an expression of $v$, which is the $H^p_0$ representative of the linear response operator $R\in H^{-p}$.
For convenience, in this subsection we assume that the coefficients in \Cref{e:hp_product} are $C_0 = 1, \ldots, C_p=1$.

Consider a compact subset $\cM$ of $R^M$ with $C^p$-smooth boundary.
Assume that the support of the physical measure is in $\cM$.
Denote 
\[
H^p_0 :=\{X\in H^p(\cM):
X|_{\partial \cM} = \partial_n X|_{\partial \cM}
=\ldots = \partial_n^{(p-1)} X|_{\partial \cM} = 0\},
\]
where $\partial_n$ is the derivative along the normal direction of the boundary $\partial \cM$.
By the same arguments as theorem 1 in \cite[section 5.5]{Evans2010}, since boundary is $C^p$ and $X\in H^p$, the boundary value of $X$ is well-defined in the trace sense, and is in $H^{p-1}(\partial \cM)$.
The optimal response problem is 
\[\begin{split}
\max_{X \in H_0^p(\cM)} R(X)
\;, \quad
\textnormal{s.t. } ||X||_{H^p} = 1 \;.
\end{split} \]
Note that the boundedness of $R$ on $H^p(\cM)$ established in \Cref{s:Hp rep} is still effective in this case.
Hence, $R$ is also bounded on $H^p_0(\cM)$, and there is a unique representative of $R$ in $H^p_0(\cM)$, denoted by $v$.

Define the differential operator $A$ by
\[\begin{split}
   AX: = X-\Delta X + \ldots + (-\Delta)^p X.
\end{split}\]
Note that $A$ reduces the order of differentiability by $2p$.
Then, for $X\in C^\infty\bigcap H^p_0$,
\[\begin{split}
    \ip{X,AX}_{L^2} = ||X||_{H^p}^2
    = \int_\cM X^2 + |DX|^2 +\ldots +|D^p X|^2 dx \\
    = \int_\cM X\left( X-\Delta X + \ldots + (-\Delta)^p X \right)
    = \ip{X,X}_{H^p}
    .
\end{split}\]
Note that here the $(p-1)$th derivative of the boundary value of $X$ is a well-defined function, so we can integrate by parts.
Moreover, $\ip{X,AY}_{L^2}= \ip{X,Y}_{H^p}$ if $Y$ is also in $\in C^\infty\bigcap H^p_0$.

By Lax-Milgram theorem \cite[section 6.2.1]{Evans2010}, the following PDE
\[
Av = R, \quad R\in H^{-p}(\cM)
\]
has a unique weak solution, denoted as $A^{-1}R:=v\in H_0^p(\cM)$, that is, 
\[
\ip{v,X}_{H^p} = R (X),
\quad\textnormal{for any}\quad
X\in H_0^p(\cM).
\]
Hence, as previous sections, $v$ is the $H^p_0$ representative of $R\in H^{-p}$.

To numerically compute $v$, we need to first compute an approximation of the `$L^2$ representative' of $R$ by some kind of finite element method.
First, in the finite element method, we compute an approximate mollified density of the physical measure, which is now a differentiable function, not a distribution.
Then, by integration by parts (in the discrete context, this is just collecting terms with $X$ evaluated at the same point together), we can find a function $r$, such that $R(X)\approx \int_\cM r X dx$.

Then we can find $ \tilde v $, the approximate $H^p_0$ representative of $R$.
More specifically, for any $C^\infty_0$ test function $X$, we want
\[
\ip{ \tilde v ,X}_{H^p} = \int_\cM r X dx \approx R (X).
\]
By integration by parts, the left hand side is
\[
\int_\cM  \tilde v X + D \tilde v  \cdot DX +\ldots + D^p \tilde v  \cdot D^pX dx
= 
\int_\cM ( \tilde v -\Delta  \tilde v  + \ldots + (-\Delta)^p  \tilde v ) X dx
\]
By substitution, we want the following holds for any $X\in C^\infty_0$,
\[
\int_\cM ( \tilde v -\Delta  \tilde v  + \ldots + (-\Delta)^p  \tilde v ) X dx = \int_\cM r X dx.
\]
Hence, we can solve $ \tilde v $ by numerically solving the high-order Laplacian equation $A \tilde v  = r$.
More specifically, the equation to be solved is
\begin{equation} \begin{cases}
 \tilde v -\Delta  \tilde v  + \ldots + (-\Delta)^p  \tilde v = r, 
\quad \textnormal{ in } \cM ; \\
 \tilde v  = \partial_n  \tilde v  =\ldots = \partial_n^{(p-1)}  \tilde v =0, 
\quad \textnormal{ on } \partial \cM.
\end{cases} \end{equation}
Note that this PDE is in the strong sense, since now $r$ is a function rather than a distribution.

We may write an integral formula of $ \tilde v $ by Green's function.
In particular, denote $G(x,\cdot)\in H_0^p(\cM)$ as the Green's function, which is the weak solution of
\[
A (G(x,y)) = \delta_x(y),
\quad
x\in \cM.
\]
Here $\delta_x$ is a bounded linear functional on $H^p_0$.
Note that here $A$ is a differential operator in $y$.
Using $G$, we can write the explicit expression of $A^{-1}r$,
\[
 \tilde v (y) = (A^{-1}r) (y) 
= \int_\cM G(x, y) r(x) dx.    
\]
This integral formula is nice to look at, but, for numerical purposes, it might not be as useful as directly solving the Laplacian equation.

\appendix

\section{Enumerating \texorpdfstring{$\vec{n}$}{n} in computer programming}
\label{s:enum vecn}

The computer program needs to enumerate over all basis functions, $B_{\vec{n}}^j$.
In this section we adopt \textit{python} numbering, where all sequences starts from the zeroth element.
The most straight forward idea is to construct  $M+1$ levels of for-loops, one level for enumerating over $0\le j\le M-1$, then one level for each $0\le n_i\le N-1$.
However, this would require changing the number of for-loop levels, hence changing the code, for each different $M$.

This section explains how to enumerate over all $B_{\vec{n}}^j$ so that the code works for any $M$.
The first function (in computer programming sense) transforms an integer $m$ into the vector $(j, \vec{n})$, such that
\[
m = j N^M + \sum_{i=0}^{M-1} n_i N^i.
\]
This function, listed in \Cref{alg:jn}, is somewhat standard.
With this function, we only need a single level of for-loop for $m$ from $0$ to $(MN^M-1)$ to enumerate all $B_{\vec{n}}^j$'s.

\begin{algorithm}
  \caption{function int2vec: get $j, \vec{n}$ from a single integer $m$.}
  \label{alg:jn}
\begin{algorithmic}[1]
  \Require $m, N, n_{bits}$
  \State $digits \gets [ \;]$ 
  \Comment{Initialize $digits$ to empty sequence.}
  \For {$i\gets$ 0 to ($n_{bits}$-1) }
  \Comment{Only $(n_{bits}-1)$ many digits are base $N$.}
    \State Insert $(m \% N)$ to the start of $digits$.
    \State $m \gets\lfloor m / N \rfloor$.
    \Comment{$\lfloor \cdot \rfloor$ rounds down to integer.}
  \EndFor
  \State Insert $m$ to the start of $digits$.
 \Ensure $digits$
\end{algorithmic}
\end{algorithm}

\Cref{alg:fgax} shows how to compute $\partial_{x_i} \tilde B^j_{\vec{n}}(x)$ for all $i, j, \vec{n}$, where $0\le i \le M-1$.
Here $x = (x_0, \ldots, x_{M-1})$.
This function generates the $\nabla X$ used in the equivariant divergence formula.
This also gives an example of using \Cref{alg:jn} to reduce the number of for-loops levels.

\begin{algorithm}
  \caption{getfgax: compute $\partial_{x_i} \tilde B^j_{\vec{n}}(x)$ for all $i, j, \vec{n}$ at a given $x$.}
  \label{alg:fgax}
\begin{algorithmic}[1]
  \Require $x = (x_0, \ldots, x_{M-1})$.
  \State $n_{pert} \gets MN^M$.
  \Comment{number of all basis functions.}
  \State $\partial_x\partial_\gamma f \gets$ zero array of size $[n_{pert}, M, M]$.
\For {$m \gets 0, n_{pert}-1$}
\State $j,\vec{n} \gets$ int2vec$(m, N, M+1)$
\Comment{$j$ takes first digit, $\vec{n}$ takes the rest digits.}
\For{$i \gets 0, M-1$} 
\Comment{$i$ labels the direction to take derivative.}
\State $\partial_x\partial_\gamma f [m,j,i] \gets 1$

\For{$k \gets 0, M-1$} 
\Comment{$k$ labels the direction of trigonometric functions.}
  \If{$k == i$}
  \If{$n_k$ is odd}
  \State 
  $\partial_x\partial_\gamma f [m,j,i] \gets 
  \lfloor \frac{n_k}2\rfloor2\pi  \sqrt{2} \cos(\lfloor \frac{n_k}2 \rfloor2\pi  x_k)
  \partial_x\partial_\gamma f [m,j,i]  $
  \Else { $n_k$ is even}
  \State
  $\partial_x\partial_\gamma f [m,j,i] \gets 
  - \lfloor \frac{n_k}2 \rfloor 2\pi \sqrt{2} \sin(\lfloor \frac{n_k}2 \rfloor 2\pi x_k)
  \partial_x\partial_\gamma f [m,j,i]  $
  \EndIf
  \Else
  \If{$n_k==0$ }
  \State do nothing
  \ElsIf {$n_k$ is odd}
  \State 
  $\partial_x\partial_\gamma f [m,j,i] \gets 
  \sqrt{2} \sin(\lfloor \frac{n_k}2\rfloor 2\pi x_k)
  \partial_x\partial_\gamma f [m,j,i]$
  \Else {\;$n_k\neq 0$ is even}
  \State
  $\partial_x\partial_\gamma f [m,j,i] \gets 
  \sqrt{2} \cos(\lfloor \frac{n_k}2\rfloor 2\pi x_k)
  \partial_x\partial_\gamma f [m,j,i]$
  \EndIf
    
\EndIf
\EndFor
\State 
$\partial_x\partial_\gamma f [m,j,i] \gets 
  \partial_x\partial_\gamma f [m,j,i] 
  \,/\,
  ||\partial_x\partial_\gamma f [m,j,i]||_{H_p}$
  
\EndFor
\EndFor
\Ensure $\partial_x\partial_\gamma f $, which is an array of size $[n_{pert}, M, M]$
\end{algorithmic}
\end{algorithm}

\noindent \textbf{Acknowledgments.}  SG is partially supported by the
research project ``Stochastic properties of dynamical systems" (PRIN 2022NTKXCX) of the Italian Ministry of Education and Research.
\newline

\noindent \textbf{Data availability.} Complete descriptions of numerical computations are included in the manuscript text.
\newline

\noindent \textbf{Conflicts of interests or competing interests.}  This submission raises  no conflicts of interests or competing interests for the authors.

\bibliographystyle{abbrv}
{\footnotesize\bibliography{library}}

\end{document}